\documentclass[12 pt,reqno]{amsart}

\usepackage{latexsym, amsmath, amssymb, longtable, booktabs,amscd,microtype,booktabs,cases,mathrsfs}
\usepackage{euscript}
\usepackage[square,numbers]{natbib}
\usepackage{graphicx}
\usepackage[dvipsnames]{xcolor}
\usepackage{caption}
\usepackage{dsfont}
\usepackage[all]{xy}
\usepackage{enumerate}
\usepackage{upgreek}
\usepackage{bm}
\usepackage{relsize}
\usepackage{multirow}
\usepackage{tikz}
\usetikzlibrary{matrix,arrows,decorations.pathmorphing}
\usepackage{collectbox}
\usepackage{enumerate}
\usepackage[colorinlistoftodos,prependcaption, textsize=tiny]{todonotes}
\reversemarginpar
\usepackage{amsmath}
\usepackage{enumitem}
\usepackage{hyperref}
\usepackage{cleveref}
\let\oldtocsection=\tocsection

\let\oldtocsubsection=\tocsubsection

\let\oldtocsubsubsection=\tocsubsubsection

\renewcommand{\tocsection}[2]{\hspace{0em}{\vspace{0.5em}}\oldtocsection{#1}{#2}}
\renewcommand{\tocsubsection}[2]{\hspace{1em}{\vspace{0.5em}}\oldtocsubsection{#1}{#2}}
\renewcommand{\tocsubsubsection}[2]{\hspace{2em}\oldtocsubsubsection{#1}{#2}}
\hypersetup{
	colorlinks,
	citecolor=red,
	filecolor=black,
	linkcolor=blue,
	urlcolor=black
}
\numberwithin{equation}{section}
\setlist[enumerate]{font=\upshape}

\newcommand{\Z}{\mathbb{Z}}

\newcommand{\D}{\EuScript{D}_n}

\newcommand{\La}{\EuScript{G}}

\newcommand{\al}{\alpha}

\newcommand{\lm}{\underbar{\textbf{m}}}

\newcommand{\C}{\mathbb{C}}

\newcommand\N{\mathbb{N}}

\def\gg{{\mathfrak{g}}}

\newcommand{\Q}{\mathbb{Q}}

\makeatother

\newtheorem{thm}{Theorem}[section]
\newtheorem{theorem}[thm]{Theorem}
\newtheorem{cor}[thm]{Corollary}

\newtheorem{prop}[thm]{Proposition}
\newtheorem{lemma}[thm]{Lemma}
\theoremstyle{definition}
\newtheorem{definition}[thm]{Definition}
\newtheorem{remark}[thm]{Remark}

\theoremstyle{definition}

\theoremstyle{remark}

\theoremstyle{remark}

\makeatletter
\def\imod#1{\allowbreak\mkern10mu({\operator@font mod}\,\,#1)}
\makeatother
\title[Classification of Harish-Chandra modules ]{\textbf{Classification of irreducible Harish-Chandra modules over extended Divergence zero Lie algebras}}
\author[Sudipta Mukherjee]{{Sudipta Mukherjee\vspace{0.1cm}}}
\begin{document}
	\maketitle
	
\begin{abstract}
	Let $\mathcal{A}_n = \C[t_1^{\pm1}, t_2^{\pm1}, \ldots, t_n^{\pm1}]$, and let $\D$ denote the divergence-zero subalgebra of $\text{Der}\,(\mathcal{A}_n)$. In this paper, we classify irreducible Harish-Chandra modules over the extended divergence-zero Lie algebra $\La:=\D \ltimes \mathcal{A}_n$ with nontrivial $\mathcal{A}_n'$-action, where $\mathcal{A}'_n= \oplus_{{\bf{m}} \in \Z^n\setminus \{\bf{0}\}} \C t^{\bf{m}}$. We prove that every such module is either cuspidal or a generalized highest weight module.  We further prove that every irreducible generalized highest weight $\La$-module is an irreducible highest weight module with respect to a suitable triangular decomposition of $\La$. As a consequence, we obtain a classification of irreducible Harish-Chandra modules over $\La$ with nontrivial $\mathcal{A}_n'$-action.
\end{abstract}
\tableofcontents
		\section{Introduction:} 
		
		Infinite dimensional Lie algebras and their representations have been studied extensively because of their importance in both mathematics and mathematical physics. In particular, they arise naturally in conformal field theory, string theory, and soliton theory. An important class of representations consists of those that decompose into finite-dimensional weight spaces. These representations are known as Harish-Chandra modules in the literature, and their classification for various classes of infinite dimensional Lie algebras has been studied by many authors.
		
		One of the most prominent and widely studied infinite-dimensional Lie algebras is the Virasoro algebra, which plays a fundamental role in theoretical physics. It can be realized as the one dimensional central extension of the Lie algebra of diffeomorphisms of the circle, or equivalently, as the one dimensional central extension of the Lie algebra of derivations of the Laurent polynomial ring in one variable. The classification of all irreducible Harish-Chandra modules over the Virasoro algebra was conjectured by V. Kac \cite{bom}, and later proved by O. Mathieu in 1992 \cite{om}.
		
		The higher dimensional analogue of the Virasoro algebra is the Witt algebra. Let $\mathcal{A}_n$ denote the Laurent polynomial ring in $n$ variables. Its derivation algebra $\mathcal{W}_n=\text{Der}\,(\mathcal{A}_n)$ is popularly known as the Witt algebra and can be identified with the Lie algebra of polynomial vector fields on an $n$-dimensional torus (see \Cref{a93}). In contrast to the one-variable case, $\mathcal{W}_n$ is centrally closed for $n \geq 2$. The representation theory of Witt algebras has been studied extensively by many mathematicians (see \cite{bill,rao2,r3,guo,guo11,l1,l,maz}). In \cite{sh}, Shen introduced a class of modules over the Witt algebra $\mathcal{W}_n$ arising from irreducible modules over the general linear Lie algebra $\mathfrak{gl}_n$, which were also given by Larsson in \cite{l}. Their irreducibility was determined by Eswara Rao in \cite{rao2}. Later, Billig and Futorny gave the complete classification of all irreducible Harish-Chandra modules over $\mathcal{W}_n$ \cite{bill}.
		
		Extended affine Lie algebras (EALAs for short; see \cite{a207} for the definition) were introduced by Hoegh-Krohn and Torresani in \cite{a201} under the name of quasisimple Lie algebras.  Over the last three decades, their structure theory has been intensively studied by many mathematicians (see \cite{a205,a203,a202,a204} and references therein). These algebras provide a natural framework extending finite-dimensional simple Lie algebras (nullity $0$) and affine Kac--Moody algebras (nullity $1$). Other important examples include toroidal EALAs \cite{a206}, Hamiltonian EALAs, contact EALAs, and minimal EALAs \cite{a207}. It is known that full toroidal Lie algebras are not EALAs, since they do not admit a nondegenerate symmetric invariant bilinear form. To obtain an EALA structure, one considers an important subalgebra of $\mathcal{W}_n$ consisting of all skew derivations of $\mathcal{A}_n$. This subalgebra of $\mathcal{W}_n$ is called the Lie algebra of divergence-zero vector fields on a torus, denoted by $\D$. In \cite{a208}, it was shown that the classification of irreducible integrable Harish-Chandra modules over nullity $2$ toroidal EALAs of type $A_1$ reduces to the classification of Harish-Chandra modules over $\EuScript{D}_2$. The connection of the Lie algebra $\EuScript{D}_2$ with the vertex algebra has been established in \cite{a209}. The restrictions of the tensor modules over $\mathcal{W}_n$ (introduced by Shen \cite{sh}) to the divergence-zero subalgebra $\D$ were studied in \cite{ta}, where analogous irreducibility criteria were obtained.
		
		The Witt algebra $\mathcal{W}_n$ acts naturally on the polynomial algebra $\mathcal{A}_n$ by derivations. Therefore, the emerging semidirect product
		$\bar{\EuScript{G}}_n:=\mathcal{W}_n \ltimes \mathcal{A}_n$ arises naturally and is called the extended Witt algebra. In \cite{r3}, Eswara Rao classified irreducible modules with finite-dimensional weight spaces over $\bar{\EuScript{G}}_n$ that admit compatible actions of both the Lie algebra $\mathcal{W}_n$ and $\mathcal{A}_n$. This result was extended to a classification of indecomposable modules in this category \cite{b}. Irreducible Harish-Chandra modules over $\overline{\La}_n$ with nontrivial $\mathcal{A}_n$-action were classified in \cite{guo}.
		
		In this paper, we study irreducible representations of the subalgebra $\La_n:=\D \ltimes \mathcal{A}_n$ of $\bar{\La}_n$ that have finite-dimensional weight spaces. Since the classification of irreducible Harish-Chandra modules over $\D$ is still unknown, we impose the additional assumption that the modules have nontrivial $\mathcal{A}_n'$-action, where
		$\mathcal{A}_n'=\bigoplus_{{\bf m}\in \Z^n\setminus \{{\bf 0}\}}\C t^{\bf m}$. In \cite{bta}, Billig and Talboom classified irreducible and indecomposable modules over $\La_n$ admitting compatible actions of $\D$ and $\mathcal{A}_n$. For the two variable case, representations of $\La_2$ were studied in \cite{gl,lt,ts}.
		
		Our methods are inspired by and adapted from the work of Guo-Liu \cite{gl}, Lin-Tan \cite{lt}, and Lu--Zhao \cite{lz} on higher rank Virasoro like algebras. A key difference is that higher-rank Virasoro algebras contain a solenoidal Lie algebra (see \cite{sou} for the definition), which is isomorphic to the centerless Virasoro algebra and plays a crucial role in the classification problem. In contrast, the divergence-zero Lie algebra does not contain any subalgebra isomorphic to the centerless Virasoro algebra. In particular, the techniques in \cite{guo,lz}, which rely on the representation theory of the Virasoro algebra, cannot be applied in our setting. So, the divergence-zero setting requires several additional arguments, since one must work with the subalgebra $\D$ in place of the full derivation algebra. 	We first show that any irreducible uniformly bounded $\La_n$-module with nontrivial $\mathcal{A}_n'$-action admits compatible actions of $\D$ and $\mathcal{A}_n$. This implies that such modules are jet modules (see \Cref{a93} for the definition). By applying a result of Billig and Talboom \cite{bta}, we then classify all such modules and show that they are precisely the modules of the form $V(\lambda,\boldsymbol{\alpha},c)$ (see \Cref{a93} for the definition). We next prove that every nontrivial irreducible Harish-Chandra module over $\La_n$ is either cuspidal or a generalized highest weight module (Theorem \ref{a75}). Irreducible Harish-Chandra GHW modules over $\La_2$ were classified in \cite{ts}. However, the classification of irreducible GHW modules over $\La_n$ were not known for $n \geq 3$. We prove that every irreducible generalized highest weight module over $\La_n$ is isomorphic to a highest weight module for $\La_n$ (\Cref{a28}). Combining these results, we obtain a complete classification of irreducible Harish-Chandra modules over $\La_n$ with nontrivial $\mathcal{A}_n'$-action. The paper is organized as follows.
	\subsection{Organization of the paper} The aim of this paper is to classify all irreducible Harish-Chandra modules over the extended divergence-zero Lie algebra $\La$. \Cref{a93} lays the foundation for the paper. We begin by recalling the definition and structure of the extended divergence-zero Lie algebra $\La$. We also recall some basic definitions concerning various types of weight modules over $\La$. We conclude this section by discussing an example of a class of $\La$-modules.
	
	\vspace{0.1cm}
	\Cref{a94} is devoted to the classification of irreducible cuspidal Harish-Chandra modules over $\La$. We first prove that the action of $\mathcal{A}_n$ on such modules is associative (Theorem \ref{a19}). This allows us to apply a result of Billig and Talboom to obtain the main theorem of this section (Theorem \ref{a90}).
	
	\vspace{0.1cm}
	In \Cref{a95}, we begin by introducing the notion of a generalized highest weight (GHW) module for the Lie algebra $\La$. We prove that every nontrivial irreducible Harish-Chandra module over $\La$ is either cuspidal or a GHW module (Theorem \ref{a75}). We end the section by establishing several important properties of GHW modules.
	
	\vspace{0.1cm}
	In \Cref{a96}, we classify irreducible GHW modules over $\La$. We start by defining highest weight modules over $\La$ with respect to a suitable triangular decomposition. We then prove that every irreducible GHW module over $\La$ is isomorphic to a highest weight module for $\La$. We first establish this result for $n=2$ (\Cref{a36}) and then extend it to arbitrary $n$ by using mathematical induction.
	
	\vspace{0.1cm}
	Finally, \Cref{a97} combines the results of the previous sections to obtain the main classification theorem for irreducible Harish-Chandra modules over $\La$ with nontrivial $\mathcal{A}_n'$-action (\Cref{a100}).
	\section{Preliminaries}\label{a93}
	\subsection{Notations} Throughout this paper, we consider all vector spaces, algebras, and tensor products to be over the field of complex numbers $\C$. We shall denote the set of integers, natural numbers, and real numbers by $\Z, \N, \mathbb{R}$ respectively. We write elements of $\Z^n$ and $\C^n$ in boldface. For an $n$-tuple ${\bf{m}}=(m_1,m_2,\ldots,m_n) \in \Z^n$, we denote $(m_2,\ldots,m_n) \in \Z^{n-1}$ by $\lm$. For two elements ${\bf{m}}, {\bf{k}} \in \Z^n$, we say that ${\bf{m}} \geq {\bf{k}}$ if $m_i \geq k_i$ for all $1 \leq i \leq n$. Let $(-|-)$ denote the standard inner product on $\C^n$. The universal enveloping algebra of a Lie algebra $\gg$ is denoted by $\mathcal{U}(\gg)$. Let $\{{\bf{e_1}}, {\bf{e_2}},\ldots,{\bf{e_n}} \}$ denote the standard basis of $\C^n$. The dual space of a vector space $V$ will be denoted by $V^*$. The notation $A^{\times}$ stands for the set $A \setminus \{0\}$.
	\subsection{Basics of Extended Divergence Zero Lie Algebra}
	For a positive integer $n \geq 3$, let $\mathcal{A}_n = \C[t_1^{\pm1}, t_2^{\pm1}, \ldots, t_n^{\pm1}]$ denote the Laurent polynomial ring in $n$ variables $t_1, t_2, \ldots, t_n$. For an $n$-tuple ${\bf m} = (m_1, m_2, \ldots, m_n) \in \Z^n$, we write $t^{\bf m} = t_1^{m_1} t_2^{m_2} \cdots t_n^{m_n} \in \mathcal{A}_n$
	for a typical element in $\mathcal{A}_n$.  Let $\mathcal{W}_n$ denote the derivation algebra of $\mathcal{A}_n$.  For $1 \leq j \leq n$, define $d_j = t_j \frac{\partial}{\partial t_j}$, which acts on $\mathcal{A}_n$ as a derivation. It is well known that $\mathcal{W}_n$ is spanned by the elements
	$\{t^{\bf m}d_j: \, {\bf m}\in \Z^n,\; 1\leq j\leq n\}$. This infinite dimensional derivation algebra is a very well known classical object and is popularly known as the Witt algebra of rank $n$. The Lie bracket on $\mathcal{W}_n$ is given by
	\begin{equation*}
		[t^{\bf m} d_i, t^{\bf k} d_j]
		= k_i\, t^{{\bf m}+{\bf k}} d_j - m_j\, t^{{\bf m}+{\bf k}} d_i.
	\end{equation*}
	We now introduce an alternative set of notations for the elements of $\mathcal{W}_n$. For ${\bf p} \in \C^n$ and ${\bf m} \in \Z^n$, define
	$D({\bf p}, {\bf m}) = \sum_{i=1}^{n} p_i\, t^{\bf m} d_i$.
	With this notation, the bracket formula can be written as
	\begin{equation}\label{a45}
		[D({\bf{p}}, {\bf{m}}), D({\bf{q}}, {\bf{k}})] = D({\bf{w}},\, {\bf{m+k}}), \quad \text{where}\,\,\, {\bf{w}}=({\bf{p}}|{\bf{k}}){\bf{q}}-({\bf{q}}|{\bf{m}}){\bf{p}}.
	\end{equation}
	Geometrically, $\mathcal{W}_n$ can be interpreted as the Lie algebra of complex-valued polynomial vector fields on an $n$-dimensional torus via the identification
	$t_j = e^{i\theta_j},\,\, 1 \leq j \leq n$,
	where $\theta_j$ denotes the $j$-th angular coordinate. This has an interesting subalgebra, the Lie algebra of divergence zero vector fields, denoted by $\EuScript{D}_n$.
	\begin{prop}[\cite{ta}, Proposition 2.1]
		$\D\hspace{-0.05cm}=\hspace{-0.05cm}\{D({\bf{u}}, {\bf{r}}) \hspace{-0.05cm} \in \mathcal{W}_n: {\bf{u}} \in \C^n\hspace{-0.05cm},\, \hspace{-0.05cm}{\bf{r}} \in \Z^n,\, \hspace{-0.05cm}({\bf{u}}|{\bf{r}})=0 \}$.
	\end{prop}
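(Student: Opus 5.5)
The statement identifies $\D$, the algebra of divergence-zero derivations, with the span of the $D({\bf u},{\bf r})$ satisfying $({\bf u}|{\bf r})=0$. So the plan is to fix the divergence, compute it on the spanning elements $D({\bf p},{\bf m})$, and check that its kernel is exactly that span. A general element of $\mathcal{W}_n$ is $X=\sum_{{\bf m}} D({\bf p}_{\bf m},{\bf m})$, a finite sum with ${\bf p}_{\bf m}\in\C^n$. This decomposition is unique, because the vectors $t^{\bf m}d_i$ form a basis of $\mathcal{W}_n$.

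For a vector field $\sum_i f_i\, d_i$ written in the logarithmic frame $d_i=t_i\partial_{t_i}$ (equivalently $-i\,\partial_{\theta_i}$ under $t_j=e^{i\theta_j}$), take the divergence with respect to the invariant volume form $d\theta_1\wedge\cdots\wedge d\theta_n$ on the torus. It is $\operatorname{div}\bigl(\sum_i f_i d_i\bigr)=\sum_i d_i(f_i)$.

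Applying this to $D({\bf p},{\bf m})=\sum_i p_i t^{\bf m} d_i$ gives
\[
\operatorname{div} D({\bf p},{\bf m})=\sum_i p_i m_i\, t^{\bf m}=({\bf p}|{\bf m})\,t^{\bf m}.
\]
The divergence is linear and maps the ${\bf m}$-graded piece into $\C t^{\bf m}$. Since the monomials $t^{\bf m}$ are linearly independent, $\operatorname{div}X=0$ holds if and only if $({\bf p}_{\bf m}|{\bf m})=0$ for every ${\bf m}$. Hence $\D=\ker(\operatorname{div})$ is the span of the $D({\bf u},{\bf r})$ with $({\bf u}|{\bf r})=0$. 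This span is the set in the statement: such elements are closed under addition within a fixed degree ${\bf r}$, so the statement's set of single terms is to be read as the spanning family.

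As a consistency check, I would verify from \eqref{a45} that this span is a subalgebra. Suppose $({\bf p}|{\bf m})=({\bf q}|{\bf k})=0$ and ${\bf w}=({\bf p}|{\bf k}){\bf q}-({\bf q}|{\bf m}){\bf p}$. Then
\[
({\bf w}|{\bf m}+{\bf k})=({\bf p}|{\bf k})({\bf q}|{\bf m})-({\bf q}|{\bf m})({\bf p}|{\bf k})=0,
\]
so the bracket stays in the span.

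The only real obstacle is fixing the convention for divergence. With respect to the standard volume form $dt_1\wedge\cdots\wedge dt_n$, the divergence of $\sum_i f_i\,t_i\partial_{t_i}$ picks up an extra term $\sum_i f_i$, and the kernel would be different. I would therefore state explicitly that divergence is taken with respect to the translation-invariant form $\prod_i dt_i/t_i$, i.e.\ $d\theta_1\cdots d\theta_n$, matching the geometric torus picture. Alternatively, and matching the introduction's description, I would define $\D$ algebraically as the skew derivations and note that the same computation applies.
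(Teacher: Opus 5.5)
Your proposal is correct and takes essentially the same route as the paper. Both compute the divergence in the angular coordinates (equivalently, in the frame $d_j=-i\,\partial_{\theta_j}$) and obtain $\text{div}\,D({\bf u},{\bf r})=({\bf u}|{\bf r})\,t^{\bf r}$; your splitting of a general element into homogeneous components, using the linear independence of the $t^{\bf m}$, simply makes explicit the step from homogeneous elements to all of $\D$ that the paper leaves implicit.
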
 
	\begin{proof}
		Let
		$X=\sum_{j=1}^{n} f_j({\bf t})\,\frac{\partial}{\partial t_j}\in \mathcal{W}_n$.
		Recall that the divergence of $X$ with respect to the coordinates $t_1,\dots,t_n$ is defined by $\text{div}\,(X)= \sum_{j=1}^{n} \frac{\partial f_j}{\partial t_j}$. Using the change of coordinates $t_j=e^{i\theta_j}$, we obtain
		\begin{equation*}
			\frac{\partial }{\partial \theta_j}=\frac{\partial t_j}{\partial \theta_j}. \frac{\partial }{\partial t_j}=it_j \frac{\partial }{\partial t_j}=id_j
		\end{equation*}
		Consequently,
		$D({\bf u},{\bf r})
		=\sum_{j=1}^{n}u_j t^{\bf r}d_j
		=-i\sum_{j=1}^{n}u_j t^{\bf r}\frac{\partial}{\partial \theta_j}$. Thus, the divergence of $D({\bf u},{\bf r})$ with respect to the angular coordinates is
		\begin{equation*}
			\text{div}\,(D({\bf{u}}, {\bf{r}}))=-i \sum_{j=1}^{n} u_j \frac{\partial t^{\bf{r}} }{\partial \theta_j}=\sum_{j=1}^{n} u_j t_j\frac{\partial t^{\bf{r}} }{\partial t_j}=\sum_{j=1}^{n} u_j r_j \,t^{\bf{r}}=({\bf{u}}| {\bf{r}})\,t^{\bf{r}}.
		\end{equation*}
		Therefore, $D({\bf{u}}, {\bf{r}}) \in \D$ if and only if $({\bf{u}}| {\bf{r}})=0$.
	\end{proof}
	It is easy to check that $\D$ is a subalgebra of $\mathcal{W}_n$. For ${\bf r}\in \Z^n$, set
	\begin{equation*}
		D_{ij}({\bf r})=r_j t^{\bf r}d_i-r_i t^{\bf r}d_j, \qquad 1\leq i,j\leq n.
	\end{equation*}
	By definition, $D_{ij}({\bf{r}})=-D_{ji}({\bf{r}}), D_{ii}({\bf{r}})=0, D_{ij}({\bf{0}})=0$. Also we have
	\begin{equation}\label{a70}
		r_kD_{ij}({\bf{r}})+r_iD_{jk}({\bf{r}})+r_jD_{ki}({\bf{r}})=0
	\end{equation}
	Moreover, it is easy to see that 
	\begin{center}
		$\D=\text{Span}\,\{D_{ij}({\bf{r}}),\, d_k: {\bf{r}} \in \Z^n,\, 1 \leq i,j,k \leq n\}$.
	\end{center} 
	A Cartan subalgebra of $\D$ is given by $\mathcal{H}=\text{Span}\,\{d_j : 1\leq j\leq n\}$. Define $\delta_i \in \mathcal{H}^{*}$ such that $\delta_i(d_j)= \delta_{i,j}$ for $1 \leq i,j \leq n$. For ${\bf{m}}= (m_1,m_2, \ldots, m_n)$, set $\delta_{{\bf{m}}}= \sum_{i=1}^n m_{i}\, \delta_{i}$. By abuse of notation, we shall write ${\bf m}$ in place of $\delta_{\bf m}$.\par
	 The extended Witt algebra is defined as $\bar{\EuScript{G}}:=\mathcal{W}_n \ltimes A_n$ with the following bracket relations along with \eqref{a45}
	\begin{equation*}
		[D({\bf{u}}, {\bf{r}} ), t^{\bf{m}}]= ({\bf{u}}|{\bf{m}})\, t^{\bf{r}+\bf{m}}, \quad [t^{\bf{r}}, t^{\bf{m}}]=0, \quad \text{for all}\,\, {\bf{u}} \in \C^n,\, {\bf{r}}, {\bf{m}} \in \Z^n.
	\end{equation*}
	This paper is concerned with the Lie algebra $\EuScript{G}_n:=\D \ltimes \mathcal{A}_n$ which is a subalgebra of $\mathcal{W}_n \ltimes \mathcal{A}_n$. For notational simplicity, we denote $\EuScript{G}_n$ by $\EuScript{G}$. We call this algebra by extended divergence zero Lie algebra. The following bracket formulas will be very useful for our study.
	\begin{equation*}
		[D_{ij}({\bf{r}}), D_{kl}({\bf{s}})]=r_js_k D_{il}({\bf{r}}+{\bf{s}})-r_js_l D_{ik}({\bf{r}}+{\bf{s}})-r_is_k D_{jl}({\bf{r}}+{\bf{s}})+r_is_l D_{jk}({\bf{r}}+{\bf{s}}),\vspace{0.1cm}
	\end{equation*}
	\begin{equation*}
		[D_{ij}({\bf{r}}), t^{\bf{m}}]=(r_jm_i-r_im_j)\, t^{\bf{r}+\bf{m}}, \quad [d_k, D_{ij}({\bf{r}})]=r_k\, D_{ij}({\bf{r}}),
		\vspace{0.15cm}
	\end{equation*}
	for all ${\bf{r}},\, {\bf{s}},\,{\bf{m}} \in \Z^n, \, 1 \leq i,j,k,l \leq n$. In particular,
	\begin{equation*}
		[D_{ij}({\bf{r}}), D_{ij}({\bf{s}})]=(r_js_i-r_is_j)\,D_{ij}({\bf{r}}+{\bf{s}}).
		\vspace{0.15cm}
	\end{equation*}
	Notice that $\La$ admits a natural $\Z^n$-grading and its homogeneous subspaces are given by
	\begin{equation}\label{a24}
		\La_{\bf m}=
		\begin{cases}
			\displaystyle \sum_{1\leq i<j\leq n}\C D_{ij}({\bf m})\oplus \C t^{\bf m}, & {\bf m}\neq {\bf 0},\\[0.5cm]
			\displaystyle \bigoplus_{j=1}^{n}\C d_j \oplus \C t^{\bf 0}, & {\bf m}={\bf 0}.
		\end{cases}
	\end{equation}
	\begin{remark}\label{a65}
		Note that $\La= \D \ltimes \mathcal{A}'_n \oplus \C t^{\bf 0}$ is a direct sum of ideals, where $\mathcal{A}'_n= \oplus_{{\bf{m}} \in \Z^n\setminus \{\bf{0}\}} \,\C t^{\bf{m}}$. Due to Schur's Lemma, the
		action of $t^{\bf 0}$ is a scalar and does not affect the module structure for an irreducible $\La$-module. Thus, we can change
		the value of $t^{\bf 0}$ appropriately due to our convenience.
	\end{remark}
	Now we recall some basic definitions.\par
	\noindent\textbf{Trivial module.} Let $L$ be a Lie algebra and let $V$ be an $L$-module. We say that $V$ is a \emph{trivial module} if
	$x\cdot v=0\,\, \text{for all } x\in L,\ v\in V$.
	A vector $v\in V$ is called a \emph{trivial vector} if it is annihilated by $L$, i.e.,
	$x\cdot v=0 \,\, \text{for all } x\in L$.
	
	\medskip
	\noindent\textbf{Harish-Chandra module.}
	A $\EuScript{G}$-module $V$ is called a \emph{Harish-Chandra module} if the following conditions hold:
	\begin{enumerate}
		\item $V$ is a weight module with respect to $\mathcal{H}=\text{Span}\,\{d_1,d_2,\ldots,d_n\}$, i.e.,
		\[
		V=\bigoplus_{\lambda\in \mathcal{H}^*} V_\lambda, \qquad \text{where} \,V_\lambda=\{v\in V : d_i\cdot v=\lambda(d_i)v,\ 1\leq i\leq n\}.
		\]
		\item  $\dim V_\lambda<\infty,\,\, \text{for all } \lambda\in \mathcal{H}^*$.
	\end{enumerate}
	The set
	$P(V)=\{\lambda\in \mathcal{H}^* : V_\lambda\neq 0\}$
	is called the \emph{set of weights} of $V$. Any nonzero vector $v\in V_\lambda$ is called a \emph{weight vector} of weight $\lambda$.
	
	\medskip
	\noindent\textbf{Cuspidal Module.} A Harish-Chandra module $V$ is said to be cuspidal if the dimensions of its weight spaces are uniformly bounded, i.e., there exists $M \in \N$ such that dim $V_{\lambda} < M$, $\forall {\lambda} \in \mathcal{H}^{*}$. These modules are also known as uniformly bounded modules.
	
	\medskip
	\noindent\textbf{$\Z^n$-Graded Module.} A $\La$-module $V$ is called $\Z^n$-graded if $V$ can be written as \\$V=\oplus_{{\bf{m}} \in \Z^n} V_{\bf m}$ such that $\La_{\bf p} V_{\bf m} \subseteq V_{\bf m + \bf p}$. A $\Z^n$-graded module is said to be irreducible if it has no nonzero proper graded submodule. 
	
	\medskip
	\noindent\textbf{Highest Weight Module.}
	Suppose that a Lie algebra $L$ admits a triangular decomposition
	$L=L^{-}\oplus L^0\oplus L^{+}$.
	An $L$-module $V$ is called a \emph{highest weight module} if there exists a nonzero vector $v\in V$ such that
	\[
	L^{+}\cdot v=0
	\qquad \text{and} \qquad
	\mathcal{U}(L) v=V.
	\]
	Such a vector $v$ is called a \emph{highest weight vector}.
	
	\medskip
	\noindent\textbf{Change of Coordinates.} Let $A$ be an $n \times n$ matrix in $GL_n{(\Z)}$ with determinant $\pm 1$. Then the assignment
	\begin{center}
		$D({\bf{u}}, {\bf{r}}) \mapsto D({\bf{Bu}},{\bf{Ar}}), \quad \quad \quad t^{{\bf{r}}} \mapsto t^{\bf{Ar}}$ 
	\end{center}
	defines a Lie algebra automorphism $T_A$ of $\EuScript{G}$, where $B=(A^t)^{-1}$ and $A^{t}$ denotes the transpose of $A$.  \par
	Alternatively, we can define another Lie algebra automorphism of $\EuScript{G}$ in a different way. Let $\{{\bm{\al}}_1,\bm{\al}_2,\ldots,\bm{\al}_n\}$ be a $\Z$-basis of $\mathbb{Z}^n$ and $\{\bm{\beta}_1,\bm{\beta}_2,\ldots, \bm{\beta}_n\}$ be the corresponding dual basis in $\mathbb{R}^n$. Define $\Phi: \EuScript{G} \rightarrow \EuScript{G}$ by
	\vspace{0.15cm}
	\begin{center}
		$D({\bf{u}}, {\bf{r}}) \mapsto D(\sum_{i=1}^{n}u_i\bm{\beta}_i\,,\,\,\, \sum_{i=1}^{n}r_i\bm{\al}_i), \quad \quad t^{\bf{m}} \mapsto t^{\sum_{i=1}^{n}m_i\bm{\al}_i}.\hspace{0.4cm} \hspace{0.4cm}  $
	\end{center}
	It is straightforward to verify that both $T_A$ and $\Phi$ are Lie algebra automorphisms of $\EuScript{G}$. These automorphisms yield a new extended divergence zero Lie algebra from the original one. We shall refer to this procedure as a \emph{change of coordinates}.
	
	\medskip
	\noindent\textbf{Jet Modules.} A $\EuScript{G}$-module $V$ is called a jet module if it satisfies the following properties:
	\begin{enumerate}
		\item $V$ is a weight module with respect to the Cartan subalgebra $\mathcal{H}$.
		\item As an $\mathcal{A}_n$-module $V$ is free with finite rank.
		\item The action of $\D$ on $V$ is compatible with the $\mathcal{A}_n$-module structure in the sense that
		\begin{equation*}
			X(fv)=(Xf)v+f(Xv),
			\quad \forall\, X\in \D,\ f\in \mathcal{A}_n,\ v\in V.
		\end{equation*}
	\end{enumerate}
	\begin{remark}
		For an irreducible module $V$ over $\La$, there exists $\lambda \in \mathcal{H}^{*}$ such that $P(V) \subseteq \{\lambda+{\bf{m}}: {\bf{m}} \in \Z^n\}$.
	\end{remark}
	\subsection{Representation of extended Witt algebras}Let $\mathfrak{gl}_n$ be the Lie algebra of all $n \times n$ complex matrices and ${\mathfrak{sl}_{n}}$ denote the subalgebra of $\mathfrak{gl}_n$ consisting of trace zero matrices. Let $E_{ij}$ denote the $n \times n$ matrix with 1 in the $(i,j)$-th position and 0 elsewhere. Then $H= \text{Span}\,\{h_i:=E_{i,i}-E_{i+1,i+1}: 1 \leq i \leq n-1\}$ is a Cartan subalgebra of ${\mathfrak{sl}_{n}}$. The corresponding fundamental weights $\omega_1, \ldots, \omega_{n-1}$ are uniquely 
	defined by $\omega_i(h_j)=\delta_{ij}$, for all $1\leq i,j\leq n-1$.\par
	For a dominant integral weight $\lambda$ of $\mathfrak{sl}_n$, let $V(\lambda)$ denote the finite dimensional irreducible highest weight module for ${\mathfrak{sl}_{n}}$. We extend $V(\lambda)$ to a $\mathfrak{gl}_n$-module by assuming the identity matrix to act as the scalar $c\in \C$, and denote the resulting module by $V(\lambda,c)$. Given any ${\boldsymbol{\alpha}} \in \C^n$, it is well known that we can define a $\mathcal{W}_n$-module structure on $F^{\alpha}(\lambda, c):=V(\lambda,c) \otimes \mathcal{A}_n$ by defining 
	\begin{equation}\label{a20}
		D({\bf{u}}, {\bf{r}})\,(v \otimes t^{\bf{s}})=\,({\bf{u}}|{\bf{s}}+{\boldsymbol{\alpha}})\, v \otimes t^{{\bf{r}}+{\bf{s}}} + (({\bf{r}}{\bf{u}}^T).v) \otimes t^{{\bf{r}}+{\bf{s}}},
	\end{equation}
	for all ${\bf{u}} \in \C^n, {\bf{r}},\,{\bf{s}} \in \Z^n$ and $v \in V(\lambda,c)$.
	\begin{theorem}[\cite{guo}, Theorem 2.1]
		$F^{\alpha}(\lambda, c)$ is irreducible if and only if either $(\lambda,c, {\boldsymbol{\alpha}}) \notin \{0\} \times \{0,n\} \times \Z^n$ or $(\lambda,c) \neq (\omega_k, k)$, for all $1 \leq k \leq n$.
	\end{theorem}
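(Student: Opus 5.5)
The plan is to prove the equivalent form of this result that is standard in the literature. $F^{\boldsymbol\alpha}(\lambda,c)$ is reducible exactly when ${\boldsymbol\alpha}\in\Z^n$ and $V(\lambda,c)\cong\bigwedge^k\C^n$ for some $0\le k\le n$. Here $\bigwedge^k\C^n$ has highest weight $\omega_k$ and identity acting by $k$, with the conventions $\omega_0=\omega_n=0$, so $k=0,n$ give the pairs $(0,0)$ and $(0,n)$. Since the displayed formulation of the statement is stated loosely, I would first check that this is the intended meaning, and then prove the two directions separately.

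\emph{Reducibility in the exceptional cases.} Suppose ${\boldsymbol\alpha}\in\Z^n$. A change of variables $s\mapsto s-{\boldsymbol\alpha}$ identifies $F^{\boldsymbol\alpha}(\omega_k,k)$ with $F^{\bf 0}(\omega_k,k)$. Next I would check directly from \eqref{a20} that $F^{\bf 0}(\omega_k,k)$ is the module $\Omega^k$ of differential $k$-forms on the torus, with $\mathcal W_n$ acting by Lie derivatives. For this I use the basis $\frac{dt_{i_1}}{t_{i_1}}\wedge\cdots\wedge\frac{dt_{i_k}}{t_{i_k}}$, on which $r u^T$ acts through the $\mathfrak{gl}_n$-action on $\bigwedge^k\C^n$. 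The de Rham differential $d$ commutes with Lie derivatives, by Cartan's formula. Therefore $\ker d$ and $\operatorname{im} d$ are submodules, and for each $k$ at least one of them is proper and nonzero. For $k=0$ the constants form a submodule; for $k=n$ the exact forms $d\Omega^{n-1}$ form a proper submodule of codimension one; for $0<k<n$ the closed forms form a submodule.

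\emph{Irreducibility otherwise.} Let $W\subseteq F^{\boldsymbol\alpha}(\lambda,c)$ be a nonzero submodule. Since $W$ is a weight submodule, it splits as $W=\oplus_s W_s\otimes t^s$ with $W_s\subseteq V(\lambda,c)$. Take $w\otimes t^s\in W$ and apply the expression
\[
X_{k}=D({\bf u},{\bf r}-{\bf k})\,D({\bf u}',{\bf k})-(\text{terms needed to stay in the same weight}),
\]
for instance $D({\bf u},{\bf r}-{\bf k})D({\bf u}',{\bf k})(w\otimes t^{s})$ with ${\bf r}$ fixed. The result is a vector-valued polynomial in ${\bf k}\in\Z^n$, and its values lie in the finite-dimensional space $V\otimes t^{s+{\bf r}}$. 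Taking finite differences in ${\bf k}$ therefore puts each coefficient of this polynomial into $W_{s+{\bf r}}$. The degree-two coefficients give products $({\bf k}{\bf u}'^T)(\,{\bf k}{\bf u}^T)w$, and after polarization these yield operators of the form $E_{ab}E_{cd}+E_{cb}E_{ad}$ (with index constraints coming from the ${\bf k}$-dependence). The linear coefficients give operators built from $E_{ab}$ and $({\bf u}|s+{\boldsymbol\alpha})$. I would then show two things. First, $W_{s'}$ is the same subspace $W_0$ for all $s'$, because shifting is realized by $D({\bf u},{\bf r})$ up to these operators. Second, $W_0$ is stable under enough of $\mathfrak{gl}_n$, namely all $E_{ab}$, which generate $\mathfrak{gl}_n$. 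Irreducibility of $V(\lambda,c)$ then forces $W_0=V$.

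\emph{Main obstacle.} The hard step is to show that the quadratic operators recover the individual $E_{ab}$, that is, that $W_0$ is $\mathfrak{sl}_n$-stable, except precisely in the exterior-power cases. My approach is to look at the relation $E_{ab}E_{ab}=0$ for $a\neq b$, together with the anticommutation behaviour on $\bigwedge^k\C^n$. These are exactly the relations that make the quadratic terms degenerate there, and they are what allow $d$ to intertwine. I would characterize $\bigwedge^k\C^n$ as the only irreducible $\mathfrak{gl}_n$-modules on which all $E_{ab}^2$ vanish and identity acts by $k$. In every other case some $E_{ab}^2$ is nonzero on $V$, and combining it with the linear terms lets one solve for $E_{ab}$ acting on $W_0$. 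In the exceptional $V$ with ${\boldsymbol\alpha}\notin\Z^n$, every scalar $({\bf u}|s+{\boldsymbol\alpha})$ is nonzero for suitable ${\bf u}$. This lets me invert the linear terms and again get $E_{ab}$-stability, which finishes the proof.
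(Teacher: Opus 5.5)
The paper gives no proof of this statement. It is quoted from \cite{guo}, and the result is due to Eswara Rao \cite{rao2}. The main problem is that the reformulation you set out to prove is false. The display is indeed garbled: it should read ``and'', with $1\le k\le n-1$. But it attaches the condition $\boldsymbol{\alpha}\in\Z^n$ only to $(\lambda,c)\in\{0\}\times\{0,n\}$, whereas you moved that condition onto all exterior powers.

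For $1\le k\le n-1$, the module $F^{\boldsymbol{\alpha}}(\omega_k,k)$ is reducible for \emph{every} $\boldsymbol{\alpha}\in\C^n$. Your de Rham argument goes through verbatim for the twisted complex $t^{\boldsymbol{\alpha}}\Omega^{\bullet}$. The map $v\otimes t^{\bf s}\mapsto (({\bf s}+\boldsymbol{\alpha})\wedge v)\otimes t^{\bf s}$ is a homomorphism $F^{\boldsymbol{\alpha}}(\omega_k,k)\to F^{\boldsymbol{\alpha}}(\omega_{k+1},k+1)$. Its kernel has fibre $({\bf s}+\boldsymbol{\alpha})\wedge\bigwedge^{k-1}\C^n$, which is a proper nonzero subspace whenever ${\bf s}+\boldsymbol{\alpha}\neq{\bf 0}$.

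Concretely, for $k=1$ formula \eqref{a20} reads $D({\bf u},{\bf r})(v\otimes t^{\bf s})=\big(({\bf u}|{\bf s}+\boldsymbol{\alpha})v+({\bf u}|v){\bf r}\big)\otimes t^{{\bf r}+{\bf s}}$. Hence $D({\bf u},{\bf r})\big(({\bf s}+\boldsymbol{\alpha})\otimes t^{\bf s}\big)=({\bf u}|{\bf s}+\boldsymbol{\alpha})\,({\bf r}+{\bf s}+\boldsymbol{\alpha})\otimes t^{{\bf r}+{\bf s}}$. So $\operatorname{span}\{({\bf s}+\boldsymbol{\alpha})\otimes t^{\bf s}:{\bf s}\in\Z^n\}$ is a proper nonzero submodule, for instance when $\boldsymbol{\alpha}=(\tfrac12,0,\ldots,0)$.

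Therefore the final step of your irreducibility argument (exterior powers with $\boldsymbol{\alpha}\notin\Z^n$) cannot work, and you can see why. With ${\bf r}={\bf 0}$, the linear coefficients only make $W_{\bf s}$ stable under $\{X\in\mathfrak{gl}_n : X({\bf s}+\boldsymbol{\alpha})=0\}$. That subalgebra preserves $({\bf s}+\boldsymbol{\alpha})\wedge\bigwedge^{k-1}\C^n$, so the nonvanishing of $({\bf u}|{\bf s}+\boldsymbol{\alpha})$ does not upgrade this to $\mathfrak{gl}_n$-stability. Your reducibility half is correct, but it only covers integral $\boldsymbol{\alpha}$.

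Even for the corrected statement, the irreducibility sketch lacks its key mechanism. Write $\rho$ for the action of $\mathfrak{gl}_n$ on $V$.

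First, your dichotomy asks whether some $\rho(E_{ab})^2$, $a\neq b$, is nonzero, but this test is blind to $c$. We have $\rho(E_{ab})^2=0$ on $V(\omega_k,c)$ and on $V(0,c)$ for every $c$. So the irreducible modules $F^{\boldsymbol{\alpha}}(\omega_k,c)$ with $c\neq k$, and $F^{\boldsymbol{\alpha}}(0,c)$ with $c\notin\{0,n\}$, taken with $\boldsymbol{\alpha}\in\Z^n$, fall into neither of your branches. What actually detects the exceptional $V$ is the whole quadratic coefficient $({\bf u}|{\bf k})\,\rho({\bf k}{\bf u}'^{T})-\rho({\bf k}{\bf u}^{T})\rho({\bf k}{\bf u}'^{T})$. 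It vanishes identically exactly when $V\cong\bigwedge^m\C^n$ with the identity acting by $m$.

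Second, you give no argument for how quadratic elements of $\mathcal{U}(\mathfrak{gl}_n)$ yield stability under the individual $E_{ab}$.

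Third, the claim that all $W_{{\bf s}'}$ coincide ``because shifting is realized by $D({\bf u},{\bf r})$'' is the crux, not a routine step. $D({\bf u},{\bf r})$ maps $W_{\bf s}$ into $W_{{\bf s}+{\bf r}}$ through the operator $({\bf u}|{\bf s}+\boldsymbol{\alpha})+\rho({\bf r}{\bf u}^{T})$, not the identity. In the de Rham examples the fibres $W_{\bf s}$ genuinely vary with ${\bf s}$. Even once $W_{\bf s}=V$ is known, propagating this to $W_{{\bf s}+{\bf r}}$ requires that operator to be invertible for some ${\bf u}$, or a two-step route. This is exactly where $c$ and the integrality of $\boldsymbol{\alpha}$ decide the one-dimensional cases $\lambda=0$.
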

	Furthermore, for any $e\in \C$, the module $F^{\alpha}(\lambda,c)$ can be extended to a $(\mathcal{W}_n\ltimes \mathcal{A}_n)$-module by defining
	\begin{equation*}
		t^{\bf r}(v\otimes t^{\bf s})=ev\otimes t^{{\bf r}+{\bf s}},\quad \forall {\bf r},{\bf s}\in \Z^n, v\in V(\lambda,c).
	\end{equation*}
	We denote the resulting $(\mathcal{W}_n\ltimes \mathcal{A}_n)$-module by
	$F^{\alpha}(\lambda,c,e)$.
	
	\begin{theorem}[\cite{guo}, Theorem 2.1]\label{a21}
		As a ($\mathcal{W}_n \ltimes \mathcal{A}_n)$-module $F^{\alpha}(\lambda, c,e)$ is irreducible if $e \neq 0$.
	\end{theorem}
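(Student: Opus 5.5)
The plan is to show that any nonzero $(\mathcal{W}_n\ltimes\mathcal{A}_n)$-submodule $W\subseteq F^{\alpha}(\lambda,c,e)$ must be the whole module. The first step is to use the $\mathcal{A}_n$-action. For $w=\sum_{\bf s} v_{\bf s}\otimes t^{\bf s}\in W$ we have
\[
t^{\bf r}\cdot w=e\sum_{\bf s} v_{\bf s}\otimes t^{{\bf r}+{\bf s}}.
\]
Since $e\neq 0$, $W$ is stable under multiplication by every monomial $t^{\bf r}$ on the second tensor factor, i.e. $W$ is an $\mathcal{A}_n$-submodule for the natural (associative) $\mathcal{A}_n$-module structure on $V(\lambda,c)\otimes\mathcal{A}_n$.

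Next, $W$ is an $\mathcal{H}$-submodule. The vector $v\otimes t^{\bf s}$ has weight $\boldsymbol{\alpha}+{\bf s}$, so $W$ is a sum of its weight spaces, and these are the spaces $V(\lambda,c)\otimes t^{\bf s}$. Hence
\[
W=\bigoplus_{\bf s} W_{\bf s}\otimes t^{\bf s},\qquad W_{\bf s}\subseteq V(\lambda,c).
\]
Multiplication by $t^{{\bf s}'-{\bf s}}$ maps $W_{\bf s}\otimes t^{\bf s}$ into $W_{{\bf s}'}\otimes t^{{\bf s}'}$ and is invertible, so all $W_{\bf s}$ equal a single nonzero subspace $U\subseteq V(\lambda,c)$. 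Thus $W=U\otimes\mathcal{A}_n$.

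It remains to show that $U$ is a $\mathfrak{gl}_n$-submodule; then irreducibility of $V(\lambda,c)$ forces $U=V(\lambda,c)$ and $W$ is everything. Apply \eqref{a20} to $u\otimes t^{\bf s}$ with $u\in U$:
\[
D({\bf u},{\bf r})(u\otimes t^{\bf s})=({\bf u}|{\bf s}+\boldsymbol{\alpha})\,u\otimes t^{{\bf r}+{\bf s}}+(({\bf r}{\bf u}^T)u)\otimes t^{{\bf r}+{\bf s}}.
\]
The first term already lies in $W$, so $(({\bf r}{\bf u}^T)u)\otimes t^{{\bf r}+{\bf s}}\in W$, which gives $({\bf r}{\bf u}^T)U\subseteq U$ for all ${\bf r}\in\Z^n$ and ${\bf u}\in\C^n$. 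Taking ${\bf r}={\bf e_i}$ and ${\bf u}={\bf e_j}$ produces every matrix unit $E_{ij}$, so $U$ is $\mathfrak{gl}_n$-stable.

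There is no serious obstacle in this argument. The only point needing care is the weight-space decomposition: one must check that weights ${\boldsymbol{\alpha}}+{\bf s}$ are distinct for distinct ${\bf s}$, which is immediate. Note that the hypothesis $e\neq 0$ is exactly what lets us bypass the exceptional cases of the preceding irreducibility theorem.
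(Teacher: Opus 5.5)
Your proof is correct and complete. Since $e\neq 0$, $W$ is stable under the shifts $v\otimes t^{\bf s}\mapsto v\otimes t^{{\bf r}+{\bf s}}$. The weight decomposition (the weights $\boldsymbol{\alpha}+{\bf s}$ are pairwise distinct) then forces $W=U\otimes\mathcal{A}_n$. Finally, $D({\bf e}_j,{\bf e}_i)$ gives $E_{ij}U\subseteq U$ for all $i,j$, so $U=V(\lambda,c)$.

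The paper gives no proof of this statement; it imports it from \cite{guo}. Your self-contained verification is the natural direct route, and it also yields something the bare citation does not. If you use only $i\neq j$, then $({\bf e}_j|{\bf e}_i)=0$, so $D({\bf e}_j,{\bf e}_i)\in\D$. The off-diagonal $E_{ij}$ generate $\mathfrak{sl}_n$, and $d_k\in\D$ and the $t^{\bf r}$ are still available. Hence the same argument shows directly that $V(\lambda,\boldsymbol{\alpha},e)$ is irreducible over $\D\ltimes\mathcal{A}_n$ for $e\neq 0$.

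The paper asserts that fact ``by Theorem~\ref{a21}''. However, irreducibility over $\mathcal{W}_n\ltimes\mathcal{A}_n$ does not by itself imply irreducibility of the restriction to a subalgebra, so your argument is what actually justifies that later claim.
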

	We now define a class of $(\D \ltimes \mathcal{A}_n)$-modules by restricting the $(\mathcal{W}_n \ltimes \mathcal{A}_n)$-module action given in \eqref{a20} to the subalgebra $(\D \ltimes \mathcal{A}_n)$. Observe that $({\bf{u}}| {\bf{r}})=0$ implies that the trace of ${\bf{r}}{\bf{u}}^T$ is zero. Hence ${\bf r}{\bf u}^{T}\in \mathfrak{sl}_n$, and the resulting $(\D \ltimes \mathcal{A}_n)$-modules are therefore of the form $V(\lambda)\otimes \mathcal{A}_n$. More explicitly, the action is given by
	\begin{equation*}
		\begin{aligned}
			D_{ij}({\bf{r}})(v \otimes t^{\bf{s}})=& (r_j(s_i+\alpha_i)-r_i(s_j+\alpha_j))\,v \otimes t^{{\bf{r}}+{\bf{s}}}\\
			&\,\, + \bigg(r_i \sum_{\substack{k=1 \\ k \neq i}}^{n}r_k E_{ki} - r_j \sum_{\substack{k=1 \\ k \neq j}}^{n}r_k E_{kj} +r_ir_j(E_{ii}-E_{jj})\bigg).v \otimes t^{{\bf{r}}+{\bf{s}}}\\
			d_i(v \otimes t^{\bf{s}})=&(s_i+\alpha_i)\,v \otimes t^{\bf{s}}, \quad t^{\bf{r}}(v \otimes t^{\bf{s}})=ev \otimes t^{{\bf{r}}+{\bf{s}}}.
		\end{aligned}
	\end{equation*}
	We denote this module by $V(\lambda, \boldsymbol{\alpha},e)$. By Theorem \ref{a21}, it follows that $V(\lambda, \boldsymbol{\alpha},e)$ is irreducible whenever $e \neq 0$.
	
	The following elementary lemma will be used repeatedly throughout the paper.
	
	\begin{lemma}[\cite{sou}, Lemma 3.2]
		Every nonzero module over a Lie algebra has a nonzero irreducible subquotient.
	\end{lemma}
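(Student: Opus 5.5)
The statement is the elementary lemma that every nonzero module $V$ over a Lie algebra $L$ has a nonzero irreducible subquotient. The plan is to produce two submodules $W' \subsetneq W \subseteq V$ with no submodule strictly between them, so that $W/W'$ is irreducible and nonzero.

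First, fix any nonzero vector $v \in V$ and set $W = \mathcal{U}(L)v$, the cyclic submodule generated by $v$. Every submodule of $W$ that contains $v$ is equal to $W$. Hence the proper submodules of $W$ are exactly the submodules of $W$ not containing $v$.

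Next, let $\mathcal{S}$ be the set of proper submodules of $W$, ordered by inclusion. This set is nonempty, since $0 \in \mathcal{S}$ because $v \neq 0$. For any chain $\{W_i\}$ in $\mathcal{S}$, the union $\bigcup_i W_i$ is again a submodule: it is closed under addition because the $W_i$ are totally ordered, and it is stable under $L$. The union does not contain $v$, since no $W_i$ does, so it is proper and lies in $\mathcal{S}$. Zorn's lemma then gives a maximal element $W' \in \mathcal{S}$.

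Finally, we check that $W/W'$ is irreducible. It is nonzero because $W'$ is proper. By the correspondence theorem, a submodule of $W/W'$ has the form $U/W'$ with $W' \subseteq U \subseteq W$. Maximality of $W'$ forces $U = W'$ or $U = W$, so $W/W'$ has no nonzero proper submodules. This gives the required irreducible subquotient.

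The only real obstacle is the existence of a maximal proper submodule, since $V$ need not be finitely generated or of finite length. Working inside the cyclic module $W$ removes this difficulty: properness is equivalent to not containing $v$, and that condition is preserved under unions of chains. This is exactly what makes the Zorn's lemma step go through.
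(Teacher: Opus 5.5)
Your proof is correct: working inside the cyclic submodule $W=\mathcal{U}(L)v$ makes \emph{proper} equivalent to \emph{not containing $v$}, a condition preserved under unions of chains. Zorn's lemma therefore gives a maximal proper submodule $W'$, and $W/W'$ is a nonzero irreducible subquotient. The paper gives no proof of its own (it only quotes the lemma from \cite{sou}), and your argument is the standard one for this fact.
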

	  
	\section{Classification of Irreducible cuspidal modules over $\EuScript{G}$}\label{a94}
	In this section, we classify the irreducible cuspidal $\EuScript{G}$-modules where $\mathcal{A}_n'$ acts non-trivially. Throughout this section, $V$ will always denote an irreducible cuspidal $\EuScript{G}$-module with non-trivial $\mathcal{A}_n'$-action. We begin with two lemmas from \cite{guo}, which will play an important role in the classification.

\begin{lemma}[\cite{guo}, Lemma 3.1]\label{a1}
	Let $V=\oplus_{{\bf{m}} \in \Z^n} V_{\bf{m}}$ be an irreducible uniformly bounded $\Z^n$-graded module over $\mathcal{A}_n$. Then dim $V_{\bf{m}} \leq 1$ for all ${\bf{m}} \in \Z^n$.
\end{lemma}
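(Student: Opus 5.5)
The idea is to exploit the fact that $\mathcal{A}_n$ is a commutative algebra graded by the free abelian group $\Z^n$, and that every homogeneous element $t^{\bf m}$ is invertible in $\mathcal{A}_n$. For each ${\bf m}\in\Z^n$ we set $N=\max_{\bf m}\dim V_{\bf m}$, which is finite by uniform boundedness. The plan is to produce a nonzero vector that is a simultaneous eigenvector for the operators relating different graded pieces. From this we build a graded submodule with one-dimensional components, and irreducibility then forces it to be all of $V$.

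The key steps are as follows.

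\emph{Step 1.} Fix ${\bf m}$ with $V_{\bf m}\neq 0$. For each ${\bf k}\in\Z^n$, the operators
\[
T_{\bf k}:=t^{-{\bf k}}t^{\bf k}|_{V_{\bf m}} \in \operatorname{End}(V_{\bf m})
\]
all commute with one another, since the algebra is commutative. Note that $t^{-\bf k}t^{\bf k}=t^{\bf 0}$ in the algebra. If the action on $V$ is associative (genuine module over the associative algebra $\mathcal{A}_n$), then $T_{\bf k}=t^{\bf 0}$ acts as the identity, assuming $1$ acts as $1$.

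\emph{Step 2.} In that case each $t^{\bf k}\colon V_{\bf m}\to V_{{\bf m}+{\bf k}}$ is injective, with left inverse $t^{-\bf k}$, and symmetrically surjective. Hence all nonzero $V_{\bf m}$ have the same dimension and are identified via $t^{\bf k}$. Now take any nonzero $v\in V_{\bf m}$. The span $\bigoplus_{\bf k}\C\, t^{\bf k}v$ is a graded submodule, because $t^{\bf j}t^{\bf k}v=t^{{\bf j}+{\bf k}}v$. By irreducibility it equals $V$, so each $V_{{\bf m}+{\bf k}}$ is spanned by the single vector $t^{\bf k}v$ and has dimension at most $1$.

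\emph{Step 3.} The remaining case is where $t^{\bf 0}$ acts by $0$, or where the module is only a module over the abelian Lie algebra, which is the setting the paper actually uses, since $\mathcal{A}_n\subset\La$ acts as a Lie algebra. Here the operators $t^{\bf k}$ merely commute. I would pass to commuting operators on the finite-dimensional spaces $V_{\bf m}$ and use a simultaneous eigenvector. If some $t^{\bf k}$ acts as zero on $V_{\bf m}$, the set of vectors killed by all $t^{\bf k}$ is a graded submodule, so it is either $0$ or $V$. If it is $V$, the module is trivial, every $\mathcal{A}_n$-submodule is just a graded subspace, and irreducibility forces $V$ to be one-dimensional, supported at a single ${\bf m}$.

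\emph{Step 4.} Otherwise, consider the family $\{T_{\bf k}\}$ of commuting endomorphisms of the finite-dimensional space $V_{\bf m}$, and pick a common eigenvector $v\neq 0$ with $T_{\bf k}v=c_{\bf k}v$. Then $W=\sum_{\bf k}\C\, t^{\bf k}v$ is the candidate submodule. To check $t^{\bf j}W\subseteq W$ we need $t^{\bf j}t^{\bf k}v\in \C\, t^{{\bf j}+{\bf k}}v$.

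This is the main obstacle: in a general module over the abelian Lie algebra, the products $t^{\bf j}t^{\bf k}$ and $t^{{\bf j}+{\bf k}}$ are unrelated. The attempted route is instead to take $W=\mathcal{U}(\mathcal{A}_n)v$ for the common eigenvector $v$. By irreducibility $W=V$, so $V_{\bf m}$ is spanned by the monomials $t^{{\bf k}_1}\cdots t^{{\bf k}_r}v$ with $\sum {\bf k}_i={\bf 0}$. I would then use commutativity together with the eigenvector property, applied to the operators $t^{-{\bf k}}t^{\bf k}$ and their products, to show that every such monomial is a scalar multiple of $v$, giving $\dim V_{\bf m}=1$.

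The difficulty is that a product of several distinct operators need not reduce to the pairs $T_{\bf k}$. I expect to handle this by choosing $v$ as a common eigenvector of the entire commutative algebra $\mathcal{U}(\mathcal{A}_n)_{\bf 0}$ acting on $V_{\bf m}$. This algebra is commutative, so its finite-dimensional module $V_{\bf m}$ has a common eigenvector $v$. Then $\mathcal{U}(\mathcal{A}_n)_{\bf 0}v=\C v$, and irreducibility, via $V_{\bf m}=\mathcal{U}(\mathcal{A}_n)_{\bf 0}v$, gives $\dim V_{\bf m}=1$. Since ${\bf m}$ was arbitrary, this proves the lemma.
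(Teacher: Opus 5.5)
Your proof is correct, though only its final paragraph is needed. The paper gives no argument of its own for this lemma (it quotes \cite{guo}), so there is no in-paper proof to compare against. Your closing argument is the standard one, and it is complete.

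Write $U=\mathcal{U}(\mathcal{A}_n)=\bigoplus_{\bf k}U_{\bf k}$. The degree-zero part $U_{\bf 0}$ is commutative and preserves the finite-dimensional space $V_{\bf m}$. Its image in $\operatorname{End}(V_{\bf m})$ is therefore a commuting family of operators on a finite-dimensional complex space, so it has a common eigenvector $v$, which gives $U_{\bf 0}v=\C v$. Since $v$ is homogeneous, $Uv$ is a nonzero graded submodule. Graded irreducibility then gives $Uv=V$, and hence $V_{\bf m}=(Uv)_{\bf m}=U_{\bf 0}v=\C v$. You correctly identified that the pairs $t^{-\bf k}t^{\bf k}$ alone do not suffice and that all of $U_{\bf 0}$ must be used.

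Some parts of the proposal can be removed, since the general argument covers them:
\begin{itemize}
\item Steps 1--2, the associative case with $t^{\bf 0}=1$.
\item Step 3, the trivial-action case split.
\item The first attempt in Step 4, with the operators $T_{\bf k}$.
\end{itemize}

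Two further remarks:
\begin{itemize}
\item Uniform boundedness is used only to ensure $\dim V_{\bf m}<\infty$.
\item The argument works verbatim whether ``module over $\mathcal{A}_n$'' means over the abelian Lie algebra, as in the paper's setting, or over the associative algebra, since in both cases the acting algebra is commutative.
\end{itemize}
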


\begin{lemma}[\cite{guo}, Lemma 3.2]\label{a2}
	Let $V$ be an irreducible $\EuScript{G}$-module. If there exist $f \in \mathcal{U}(\mathcal{A}_n)$ and $v \in V$ such that $fv=0$, then $f$ is locally nilpotent on $V$.
\end{lemma}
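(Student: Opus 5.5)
The plan is to use the commutativity of $\mathcal{A}_n$ inside $\La$ together with a nilradical argument in $\mathcal{U}(\mathcal{A}_n)$. Let $f \in \mathcal{U}(\mathcal{A}_n)$ and $0 \neq v \in V$ with $fv=0$. Since $\mathcal{A}_n$ is abelian, $\mathcal{U}(\mathcal{A}_n)=S(\mathcal{A}_n)$ is a commutative polynomial algebra. Set
\[
N=\{g\in \mathcal{U}(\mathcal{A}_n) : g \text{ is locally nilpotent on } V\}.
\]
The target is to show $f\in N$.

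First I would check that $N$ is an ideal of $\mathcal{U}(\mathcal{A}_n)$. If $g,h$ commute and are locally nilpotent, then $g+h$ and $gh$ are locally nilpotent by the binomial theorem. For $u\in V$, if $g^k u=0$ then $(gh)^k u = h^k g^k u=0$ for any $h\in\mathcal{U}(\mathcal{A}_n)$, using commutativity. Hence $N$ is an ideal.

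The key step is stability of $N$ under the adjoint action of $\La$. The element $X\in\La$ acts on $\mathcal{U}(\mathcal{A}_n)$ by $\mathrm{ad}\,X$, which is a derivation, and this action preserves $\mathcal{U}(\mathcal{A}_n)$ because $\mathcal{A}_n$ is an ideal of $\La$. I would show the following: if $g$ is locally nilpotent on $V$, then every $u\in V$ is killed by a power of $g$, uniformly on the finitely generated subspace $\mathcal{U}(\La)_{\le p}\,u$ for each $p$. A cleaner route is to consider, for $g$ with $gv=0$, the set
\[
V_g=\{u\in V : g^k u=0 \text{ for some } k\}.
\]
Then I would show $V_g$ is a $\La$-submodule. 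For $X\in \La$ and $g^k u=0$, the identity
\[
g^{m}X = \sum_{j} \binom{m}{j} (-1)^j\, (\mathrm{ad}\,g)^j(X)\, g^{m-j}
\]
holds, with $(\mathrm{ad}\, g)(X) = -(\mathrm{ad}\,X)(g)\in \mathcal{U}(\mathcal{A}_n)$. The commutator $[g,[g,X]]=0$, since $[g,X]\in \mathcal{U}(\mathcal{A}_n)$ is commutative with $g$. So the expansion has only two terms:
\[
g^m X u = X g^m u - m\,[X,g]\, g^{m-1}u .
\]
Taking $m=k+1$ gives $g^{k+1}Xu=0$. Therefore $V_g$ is a nonzero submodule containing $v$, so $V_g=V$ by irreducibility, and $g=f$ is locally nilpotent.

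The one thing to double check is the sign and commutation bookkeeping in the identity $g^mX=Xg^m - m[X,g]g^{m-1}$. This follows by induction on $m$ from $[g,X]$ commuting with $g$, where $[X,g]=\mathrm{ad}X(g)$ lies in the commutative algebra $\mathcal{U}(\mathcal{A}_n)$. I expect this to be the only real obstacle. The rest, namely that $V_g$ is a subspace (via commutativity, $g^{\max}$ kills sums) and irreducibility, is routine. With $V_g$ in hand, the ideal $N$ is not even needed, so I would drop it from the final write-up.
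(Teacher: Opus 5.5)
Your argument is correct and is the standard proof of this result (the paper gives no proof of its own, only the reference to \cite{guo}, Lemma 3.2): since $\mathcal{A}_n$ is an abelian ideal of $\La$, $[X,f]\in\mathcal{U}(\mathcal{A}_n)$ commutes with $f$, so $f^{k+1}Xu=Xf^{k+1}u-(k+1)[X,f]f^{k}u=0$ whenever $f^ku=0$; hence the $f$-torsion subspace is a nonzero submodule and equals $V$. The only blemish is the stray $(-1)^j$ in your general expansion, which should read $g^mX=\sum_j\binom{m}{j}\big((\mathrm{ad}\,g)^jX\big)g^{m-j}$, but your two-term formula is correct and the sign plays no role in the conclusion.
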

\begin{lemma}\label{a3}
	For ${\bf r},{\bf s}\in \Z^n$ with ${\bf s}\notin \Q {\bf r}$, there exists ${\bf u}\in \C^n$ such that
	\begin{center}
		$({\bf{u}}\,|\,{\bf{s}}) \neq 0$ \quad and \quad $({\bf{u}}\,|\,{\bf{r}}+{\bf{s}}) = 0$.
	\end{center}
	\begin{proof}
		Since ${\bf s}\notin \Q {\bf r}$, we may choose $1\leq i,j\leq n$ such that
		$s_jr_i-s_ir_j\neq 0$. Now take ${\bf{u}}=(r_i+s_i)\,{\bf{e}}_j-(r_j+s_j)\,{\bf{e}}_i$, then clearly $({\bf{u}}\,|\,{\bf{r}}+{\bf{s}}) = 0$ and $({\bf{u}}|{\bf{s}})=s_jr_i-s_ir_j \neq 0$.
	\end{proof}
\end{lemma}

\begin{prop}\label{a6}
	Let $V$ be an irreducible cuspidal $\EuScript{G}$-module. Then either $t^{\bf r}$ acts injectively on $V$ for every ${\bf r}  \neq {\bf 0}$, or $t^{\bf r}$ acts locally nilpotently on $V$ for every ${\bf r} \neq {\bf 0}$.
	\begin{proof}
		Suppose first that $t^{\bf r}$ is not injective for some ${\bf r}\neq {\bf 0}$. Then there exists $v\in V$ such that $t^{\bf r}v=0$, and Lemma \ref{a2} implies that $t^{\bf r}$ acts locally nilpotently on $V$. 	Now assume that $t^{\bf r}$ is injective for some ${\bf r}\neq {\bf 0}$. We show that this forces $t^{\bf s}$ to be injective for every ${\bf s}\neq {\bf 0}$.
		
		We first deal with the case where ${\bf s}\notin \Q{\bf r}$ and $t^{\bf{s}}$ is not injective. Then by Lemma \ref{a2}, $t^{\bf s}$ acts locally nilpotently on $V$. Consequently, $t^{-{\bf s}}t^{\bf s}$ also acts locally nilpotently on $V$, and hence on each homogeneous subspace of $V$. Since $V$ is cuspidal, there exists $N\in \N$ such that
		$(t^{-{\bf s}}t^{\bf s})^N V=\{0\}$. By Lemma \ref{a3}, we may choose ${\bf u}\in \C^n$ such that $({\bf{u}}|{\bf{s}}) \neq 0$ and $({\bf{u}}|{\bf{r}}+{\bf{s}}) = 0$. Then for every $v\in V$,
		\begin{equation*}
			0=D({\bf{u}}, {\bf{r}}+{\bf{s}})\,(t^{\bf{-s}})^N\,(t^{\bf{s}})^N\, v=N({\bf{u}}|{\bf{s}}) \bigg(t^{{\bf{r}}+2{\bf{s}}}\, (t^{\bf{-s}})^N \,(t^{\bf{s}})^{N-1}- t^{\bf{r}}\,(t^{\bf{-s}})^{N-1}\, (t^{\bf{s}})^N \bigg)v.
		\end{equation*}
		Applying $t^{\bf s}$ to both sides yields
		\begin{equation*}
			t^{\bf{r}}\,(t^{\bf{-s}})^{N-1}\, (t^{\bf{s}})^{N+1}\, V=0.
		\end{equation*}
		Since $t^{\bf r}$ is injective, we obtain
		\begin{center}
			$(t^{-{\bf s}})^{N-1}(t^{\bf s})^{N+1}V=0$.
		\end{center}
		Applying $t^{\bf s}$ successively another $(N-2)$ more times and proceeding as above, we eventually obtain
		$(t^{\bf s})^{2N}V=\{0\}$. Next choose ${\bf w}\in \C^n$ such that $({\bf{w}}|{\bf{s}}) \neq 0$ and $({\bf{w}}|{\bf{r}}-{\bf{s}}) = 0$. Then
		\begin{equation*}
			0= D({\bf{w}}, {\bf{r}}-{\bf{s}})\,(t^{\bf{s}})^{2N}\, v=2N({\bf{w}}|{\bf{s}})\,t^{\bf{r}}\,(t^{\bf{s}})^{2N-1}\,v, \quad \forall v \in V.
		\end{equation*}
		Again using the injectivity of $t^{\bf r}$, we deduce
		$(t^{\bf s})^{2N-1}V=0$. Continuing this process, we eventually obtain $t^{\bf{r}}V=\{0\}$, contradicting the injectivity of $t^{\bf r}$. Therefore $t^{\bf s}$ is injective for all ${\bf s}\notin \Q {\bf r}$.
		
		It remains to consider ${\bf s}\in \Q{\bf r}\,\cap\, \Z^n \setminus \{\bf 0\}$. Suppose that $t^{\frac{p}{q}\bf{r}}$ is not injective for some $\frac{p}{q} \in \Q$. Then Lemma \ref{a2} implies that $t^{\frac{p}{q}\bf{r}}$ acts locally nilpotently, and hence $(t^{\bf{r}})^{-p}\,(t^{\frac{p}{q}\bf{r}})^q$
		acts locally nilpotently on $V$. Hence there exists $N' \in \N$ such that $\big((t^{\bf{r}})^{-p}\,(t^{\frac{p}{q}\bf{r}})^q \big)^{N'} \,V=\{0\}$.
		Since $t^{\bf r}$ is injective, we obtain
		$\Bigl(t^{\frac{p}{q}{\bf r}}\Bigr)^{qN'}V=0$. Let $N_0$ be the smallest natural number such that $\Bigl(t^{\frac{p}{q}{\bf r}}\Bigr)^{N_0}V=0$.
		Now fix ${\bf m}\in \Z^n$ with ${\bf m}\notin \Q {\bf r}$. By Lemma \ref{a3}, we can choose ${\bf u}'\in \C^n$ such that
		$({\bf u}'|\, \frac{p}{q}{\bf r})\neq 0\,\, \text{and} \,\,
		({\bf u}'\mid {\bf m}-\frac{p}{q}{\bf r})=0$.
		Then
		\begin{equation*}
			0=D({\bf{u}}', {\bf{m}}-\frac{p}{q}{\bf{r}})\, \Bigl(t^{\frac{p}{q}\bf{r}}\Bigr)^{N_0}\,V= N_0\, ({\bf{u}}'|\,\frac{p}{q}{\bf{r}}) \,t^{\bf{m}} \,\Bigl(t^{\frac{p}{q}\bf{r}}\Bigr)^{N_0 -1}\,V.
		\end{equation*}
		By the minimality of $N_0$, we have $(t^{\frac{p}{q}\bf{r}})^{N_0 -1}\,V \neq 0$. Hence $t^{\bf m}$ is not injective, contradicting the previous case. Hence $t^{\bf s}$ is injective for all ${\bf s}\in \Q{\bf r}\,\cap\, \Z^n\setminus \{{\bf 0}\}$ and this completes the proof.
	\end{proof}
\end{prop}
\begin{prop}\label{a7}
	Let $V$ be an irreducible cuspidal $\EuScript{G}$-module. Suppose that $t^{\bf r}$ acts locally nilpotently on $V$ for every ${\bf r} \neq {\bf 0}$, then
	$\mathcal{A}_n'V=0$.
	\begin{proof}
		Since each $t^{\bf r}$ (${\bf r}\neq {\bf 0}$) acts locally nilpotently on $V$, and $V$ is cuspidal, there exists $N\in \N$ such that
		$(t^{-{\bf r}}t^{\bf r})^N V=\{0\}$ for every ${\bf{r}} \neq {\bf{0}}$. Now fix ${\bf{r}} \neq {\bf{0}}$. Let ${\bf s}_1 \notin \mathbb{Q}{\bf{r}}$ and choose ${\bf{u}}_1 \in \C^n$ such that $({\bf{u}}_1|{\bf{r}}) \neq 0$ and $({\bf{u}}_1|{\bf{r}}+{\bf{s}}_1) = 0$. Then for any $v \in V$, we have
		\begin{equation*}
			0=D({\bf{u}}_1, {\bf{r}}+{\bf{s}}_1)\,(t^{\bf{-r}})^N\,(t^{\bf{r}})^N\, v=N({\bf{u}}_1|{\bf{r}}) \bigg( t^{{\bf{s}}_1 +2{\bf{r}}}\, (t^{\bf{-r}})^N \,(t^{\bf{r}})^{N-1}- t^{\bf{s}_1}\,(t^{\bf{-r}})^{N-1}\, (t^{\bf{r}})^N     \bigg)v.
		\end{equation*}
		Applying $t^{\bf r}$ to both sides, we obtain
		\begin{equation*}
			t^{{\bf s}_1}(t^{-{\bf r}})^{N-1}(t^{\bf r})^{N+1}\,V=\{0\},
			\qquad
			\forall\, {\bf s}_1\notin \Q {\bf r}.
		\end{equation*}
		\noindent\textbf{Claim 1:} For $1 \leq j \leq N$ and ${\bf{s}}_1,\ldots, {\bf{s}}_j \in \Z^n \setminus \mathbb{Q}{\bf{r}}$, we have 
		\begin{equation*}
			t^{{\bf{s}}_1}\cdots t^{{\bf{s}}_j}\,(t^{\bf{-r}})^{N-j}\, (t^{\bf{r}})^{N+j}\, V=0.
		\end{equation*}
		We have already proved the base case $j=1$. Assume the statement holds for some $1<k<N$. Let ${\bf{s}}_{k+1}  \notin \mathbb{Q}{\bf{r}}$ and choose ${\bf{u}}_{k+1} \in \C^n$ such that $({\bf{u}}_{k+1}|{\bf{r}}) \neq 0$ and $({\bf{u}}_{k+1}\,|\,{\bf{r}}+{\bf{s}}_{k+1}) = 0$. For $v \in V$, we have
		\begin{equation*}
			\begin{aligned}
				0&=D({\bf{u}}_{k+1}\, , \,{\bf{r}}+{\bf{s}}_{k+1})\, t^{\bf{s}_1}\cdots t^{\bf{s}_k}\,(t^{\bf{-r}})^{N-k}\, (t^{\bf{r}})^{N+k}\, v\\
				&=\sum_{i=1}^{k} ({\bf{u}}_{k+1}\,|\,{\bf{s}}_{i}) t^{\bf{s}_1}\cdots t^{\bf{s}_{i-1}}\, t^{{\bf{r}}+{\bf{s}}_{i}+{\bf{s}}_{k+1}}\, t^{\bf{s}_{i+1}}\, t^{\bf{s}_k}\,(t^{\bf{-r}})^{N-k}\, (t^{\bf{r}})^{N+k}\, v\\
				&\quad \quad -(N-k) ({\bf{u}}_{k+1}\,|\,{\bf{r}})\, t^{\bf{s}_1}\cdots t^{\bf{s}_{k+1}}\,(t^{\bf{-r}})^{N-k-1}\, (t^{\bf{r}})^{N+k}\,v\\
				&\quad \quad + (N+k) ({\bf{u}}_{k+1}\,|\,{\bf{r}})\, t^{2{\bf{r}}+{\bf{s}}_{k+1}}\, t^{\bf{s}_1}\cdots t^{\bf{s}_k}\,(t^{\bf{-r}})^{N-k}\, (t^{\bf{r}})^{N+k-1}\, v.
			\end{aligned}
		\end{equation*}
		Since ${\bf r}+{\bf s}_i+{\bf s}_{k+1}\notin \Q {\bf r},\,\, \forall 1 \leq i \leq k$, each term in the above summation vanishes by the induction hypothesis. Now applying $t^{\bf{r}}$ to both sides and using induction hypothesis, we get
		\begin{equation*}
			t^{\bf{s}_1}\cdots t^{\bf{s}_{k+1}}\,(t^{\bf{-r}})^{N-k-1}\, (t^{\bf{r}})^{N+k+1}\,V=0.
		\end{equation*} 
		This proves the claim. Consequently, we have 
		\begin{equation}\label{a50}
			t^{\bf{s}_1}\cdots t^{\bf{s}_N}\,(t^{\bf{r}})^{2N}\, V=0,\quad  \text{for all}\,\, {\bf{s}}_1,\ldots, {\bf{s}}_N \in \Z^n \setminus \mathbb{Q}{\bf{r}}.
		\end{equation}
		Again take ${\bf{s}}_{N+1}  \notin \mathbb{Q}{\bf{r}}$ and choose ${\bf{u}}_{N+1} \in \C^n$ such that $({\bf{u}}_{N+1}|{\bf{r}}) \neq 0$ and $({\bf{u}}_{N+1}\,|\,{\bf{s}}_{N+1}-{\bf{r}}) = 0$. We have,
		\begin{equation*}
			\begin{aligned}
				0&=D({\bf{u}}_{N+1}\, , \,{\bf{s}}_{N+1}-{\bf{r}})\, t^{\bf{s}_1}\cdots t^{\bf{s}_N}\, (t^{\bf{r}})^{2N}\, V\\
				&=\sum_{i=1}^{N} ({\bf{u}}_{N+1}\,|\,{\bf{s}}_{i}) t^{\bf{s}_1}\cdots t^{\bf{s}_{i-1}}\, t^{{\bf{s}}_{N+1}-{\bf{r}}+{\bf{s}}_{i}}\, t^{\bf{s}_{i+1}}\, (t^{\bf{r}})^{2N}\, V\\
				&\quad \quad + 2N ({\bf{u}}_{N+1}\,|\,{\bf{r}})\, t^{\bf{s}_1}\cdots t^{\bf{s}_{N+1}}\, (t^{\bf{r}})^{2N-1}\, V
			\end{aligned}
		\end{equation*}
		Then \eqref{a50} implies that
		\begin{equation}\label{a4}
			t^{\bf{s}_1}\cdots t^{\bf{s}_{N+1}}\,(t^{\bf{r}})^{2N-1}\, V=0,\quad  \text{for all}\,\, {\bf{s}}_1,\ldots, {\bf{s}}_{N+1} \in \Z^n \setminus \mathbb{Q}{\bf{r}}.
		\end{equation}
		Continuing this process another $(2N-1)$ times, we finally get
		\begin{equation}\label{a5}
			\bigg(\prod_{j=1}^{3N}t^{{\bf{s}}_j}\bigg)V=0, \quad  \text{for all}\,\, {\bf{s}}_1,\ldots, {\bf{s}}_{3N} \in \Z^n \setminus \mathbb{Q}{\bf{r}}.
		\end{equation}
		\noindent\textbf{Claim 2:} $(\mathcal{A}_n')^{3N}\,V=\{0\}$.\vspace{0.15cm}\\
		Given arbitrary ${\bf{s}}_1,\ldots, {\bf{s}}_{3N} \in \Z^n \setminus \{{\bf{0}}\}$, we may choose ${\bf{r}} \in \Z^n \setminus \{{\bf{0}}\}$ such that ${\bf{s}}_1,\ldots, {\bf{s}}_{3N} \in \Z^n \setminus \mathbb{Q}{\bf{r}}$. Then \eqref{a5} yields $\big(\prod_{j=1}^{3N}t^{{\bf{s}}_j}\big)V=0$, hence the claim follows.
		
		Let $N_0$ be the smallest positive integer such that
		$(\mathcal{A}_n')^{N_0}V=0$.
		Then there exist $v\in V$ and ${\bf s}_1,\dots,{\bf s}_{N_0-1}\in \Z^n\setminus \{{\bf 0}\}$ such that
		$w:=t^{{\bf s}_1}\cdots t^{{\bf s}_{N_0-1}}v\neq 0$. Therefore
		$t^{\bf s}w=0,
		\,\, \text{for all } {\bf s}\neq {\bf 0}$. Then $V'=\{v \in V: t^{\bf{s}}v=0,\, \forall {\bf{s}} \neq {\bf{0}} \}$ is a non-zero $\EuScript{G}$-submodule of $V$. Since $V$ is irreducible, we must have $V'=V$, therefore,
		$\mathcal{A}_n' \cdot V=0$.
	\end{proof}
\end{prop}
\begin{theorem}\label{a19}
	Suppose that $\mathcal{A}_n'$ acts nontrivially on $V$. Then there exists a nonzero scalar $c$ such that
	\begin{equation*}
		t^{\bf r}\,t^{\bf s}.v=c \, t^{{\bf r}+{\bf s}}.v
		\qquad \text{for all }\,\, {\bf r},{\bf s}\in \Z^n \text{ and } v\in V.
	\end{equation*}
	\begin{proof}
		By remark \ref{a65}, we may assume that $t^{\bf{0}}$ acts on $V$ by $c \neq 0$. Write
		\begin{equation*}
			V=\bigoplus_{{\bf r}\in \Z^n} V_{\bf r}, \quad \text{where} \,\,\,\,V_{\bf r}=\{v\in V : d_i v=(\mu_i+r_i)v,\ 1\leq i\leq n\}.
		\end{equation*}
		Since $V$ is a uniformly bounded $\Z^n$-graded $\mathcal{A}_n$-module, Lemma \ref{a1} implies that $V$ contains an irreducible $\mathcal{A}_n$-submodule
		\begin{equation*}
			V'=\bigoplus_{{\bf r}\in \Z^n} V'_{\bf r}
			\quad \text{with} \quad V'_{\bf r}\subseteq V_{\bf r}\,\,\, \text{and} \,\,\, \dim \,V'_{\bf r}\leq 1
			\quad \text{for all}\,\, {\bf r}\in \Z^n.
		\end{equation*}
		Since $\mathcal{A}_n'$ acts nontrivially on $V$, Propositions \ref{a6} and \ref{a7} show that each $t^{\bf r}$ (${\bf r}\neq {\bf 0}$) acts injectively on $V$. Hence $\dim\, V'_{\bf r}=1$ for all ${\bf r}\in \Z^n$. Fix $0\neq v_0\in V'_{\bf 0}$. Then for every ${\bf r},{\bf s}\in \Z^n$, there exists a scalar
		$c_{{\bf r},{\bf s}}\in \C^*$
		such that
		\begin{equation}\label{a8}
			t^{\bf r}t^{\bf s}v_0=c_{{\bf r},{\bf s}}\, t^{{\bf r}+{\bf s}}v_0.
		\end{equation}
		Note that we can adjust the value of $c$ so that $c_{{\bf{e}_1}, {\bf{e}_1}}=c_{{\bf{e}_1}, {\bf{-e}}_1}$. By the definition, we have $c_{{\bf{r}}, {\bf{s}}}=c_{{\bf{s}}, {\bf{r}}}$ and $c_{{\bf{r}}, {\bf{0}}}=c$.
		Set 
		\begin{center}
			$f_{{\bf{r}}, {\bf{s}}}= t^{\bf{r}}\,t^{\bf{s}}-c_{{\bf{r}}, {\bf{s}}}\, t^{{\bf{r}}+{\bf{s}}}, \quad \quad \forall\,{\bf{r}}, {\bf{s}} \in \Z^n$.
		\end{center}
		Then $f_{{\bf r},{\bf s}}v_0=0$, so by Lemma \ref{a2}, $f_{{\bf r},{\bf s}}$ acts locally nilpotently on $V$ for all ${\bf r},{\bf s}\in \Z^n$. Let $\dim V_{\bf 0}=L$, and choose a basis $\{v_1,\dots,v_L\}$ of $V_{\bf 0}$. Since each $t^{\bf r}$ (${\bf r}\neq {\bf 0}$) acts injectively, all homogeneous components $V_{\bf r}$ have dimension $L$, and $\{t^{\bf r}v_1,\dots,t^{\bf r}v_L\}$ is a basis of $V_{\bf r}$ for every ${\bf r}\in \Z^n$. Thus, for each ${\bf r},{\bf s}\in \Z^n$, there exists a matrix $C_{{\bf r},{\bf s}}\in M_L(\C)$ such that
		\begin{equation*}
			(t^{\bf{r}}\, t^{\bf{s}}\,v_1, t^{\bf{r}}\, t^{\bf{s}}\,v_2, \ldots, t^{\bf{r}}\,t^{\bf{s}}\,v_L)=(t^{{\bf{r}}+{\bf{s}}}\,v_1,t^{{\bf{r}}+{\bf{s}}}\,v_2,\ldots, t^{{\bf{r}}+{\bf{s}}}\,v_L)\,C_{{\bf{r}}, {\bf{s}}}
		\end{equation*}
		Since $\mathcal{A}_n$ is commutative, the matrices $C_{{\bf r},{\bf s}}$ commute pairwise, i.e.,
		\begin{center}
			$C_{{\bf{r}_1}, {\bf{s}_1}}\,C_{{\bf{r}_2}, {\bf{s}_2}}=C_{{\bf{r}_2}, {\bf{s}_2}}\,C_{{\bf{r}_1}, {\bf{s}_1}}, \quad \forall \,\,{\bf{r}_1},\,{\bf{r}_2},\,{\bf{s}_1},\,{\bf{s}_2} \in \Z^n$.
		\end{center}
		Hence, by Lie's theorem, there exists an invertible matrix $Q$ such that
		$D_{{\bf r},{\bf s}}:=Q^{-1}C_{{\bf r},{\bf s}}Q$
		is upper triangular for all ${\bf r},{\bf s}\in \Z^n$. Setting 
		\begin{equation*}
			(u_1,u_2, \ldots, u_L)=(v_1,v_2, \ldots, v_L)\,Q,
		\end{equation*}
		we obtain
		\begin{equation*}
			(t^{\bf{r}}\, t^{\bf{s}}\,u_1, t^{\bf{r}}\, t^{\bf{s}}\,u_2, \ldots, t^{\bf{r}}\,t^{\bf{s}}\,u_L)=(t^{{\bf{r}}+{\bf{s}}}\,u_1,t^{{\bf{r}}+{\bf{s}}}\,u_2,\ldots, t^{{\bf{r}}+{\bf{s}}}\,u_L)\,D_{{\bf{r}}, {\bf{s}}}.
		\end{equation*}
		Consequently,
		\begin{equation*}
			(f_{{\bf{r}}, {\bf{s}}}\,u_1, f_{{\bf{r}}, {\bf{s}}}\,u_2, \ldots, f_{{\bf{r}}, {\bf{s}}}\,u_L)=(t^{{\bf{r}}+{\bf{s}}}\,u_1,t^{{\bf{r}}+{\bf{s}}}\,u_2,\ldots, t^{{\bf{r}}+{\bf{s}}}\,u_L)\,(D_{{\bf{r}}, {\bf{s}}}- c_{{\bf{r}}, {\bf{s}}}\, I_L),
		\end{equation*}
		where $I_L$ is the $L\times L$ identity matrix. Since $f_{{\bf r},{\bf s}}$ is locally nilpotent, all the diagonal entries of $D_{{\bf{r}}, {\bf{s}}}- c_{{\bf{r}}, {\bf{s}}}\,I_L$ are zero. Therefore, 
		\begin{equation}\label{a9}
			f_{{\bf{r}}, {\bf{s}}}\,u_1=0,\quad (f_{{\bf{r}}, {\bf{s}}})^L\,u_L=0, \quad f_{{\bf{r}}, {\bf{s}}}\,u_j \in \sum_{i=1}^{j-1} \C\, t^{{\bf{r}}+{\bf{s}}}\,u_i, \quad \forall \,{\bf{r}}, {\bf{s}} \in \Z^n, \,\,2 \leq j \leq L.
		\end{equation}
		Now let ${\bf{r}}, {\bf{s}} \in \Z^n$ and take $w \in V_{-({\bf{r}}+{\bf{s}})}= \sum_{i=1}^{L} \C \,t^{-({\bf{r}}+{\bf{s}})}\, u_i$. Using \eqref{a9} we have
		\begin{equation*}
			f_{{\bf{r}}, {\bf{s}}}\,w \in \sum_{i=1}^{L-1} \C \, t^{-({\bf{r}}+{\bf{s}})}\, t^{{\bf{r}}+{\bf{s}}}u_i \subseteq \sum_{i=1}^{L-1} \C (f_{({\bf{r}}+{\bf{s}}), -({\bf{r}}+{\bf{s}})}+c_{({\bf{r}}+{\bf{s}}), -({\bf{r}}+{\bf{s}})}\,t^{\bf{0}}) u_i \subseteq \sum_{i=1}^{L-1} \C u_i.
		\end{equation*}	
		Set $\overline{V}=\sum_{i=1}^{L-1} \C u_i$. Then we have $f_{{\bf{r}}, {\bf{s}}}\,V_{-({\bf{r}}+{\bf{s}})} \in \overline{V}$. Moreover,
		\begin{equation}\label{a10}
			t^{-\bf{r}}\,t^{\bf{r}}\,u_L=(f_{{-\bf{r}}, {\bf{r}}}+c_{{-\bf{r}}, {\bf{r}}} t^{\bf{0}})\,u_L \equiv c c_{{-\bf{r}}, {\bf{r}}}\,u_L \quad (\text{mod}\,\,\overline{V}).
		\end{equation}
		Now let ${\bf{r}}, {\bf{s}}, {\bf{q}} \in \Z^n$, set ${\bf{a}}= {\bf{r}}+{\bf{s}}+{\bf{q}}$. Choose ${\bf{z}} \in \C^n$ such that $({\bf{z}}|{\bf{q}})=0$. Then
		\begin{equation*}
			\begin{aligned}
				0=\,&(t^{-\bf{a}})^L\,(D({\bf{z}},{\bf{q}}))^L\,(f_{{\bf{r}}, {\bf{s}}})^L\, u_L\\
				\equiv\, & (t^{-\bf{a}})^L\,([D({\bf{z}},{\bf{q}}), f_{{\bf{r}}, {\bf{s}}}])^L\,u_L \quad \quad (\text{mod}\,\overline{V}\,)\\
				\equiv \, & (t^{-\bf{a}})^L\,\bigg( ({\bf{z}}|{\bf{r}})\, t^{{\bf{q}}+{\bf{r}}}\,t^{\bf{s}} +({\bf{z}}|{\bf{s}})\, t^{\bf{r}}\,t^{{\bf{q}}+{\bf{s}}} - c_{{\bf{r}}, {\bf{s}}}\,({\bf{z}}|{\bf{r}}+{\bf{s}})\,t^{{\bf{q}}+{\bf{r}}+{\bf{s}}}         \bigg)^L\,u_L \quad \quad (\text{mod}\,\overline{V}\,)\\
				\equiv \, & (t^{-\bf{a}})^L\,\bigg(\hspace{-0.15cm} ({\bf{z}}|{\bf{r}})(f_{{\bf{q}}+{\bf{r}}, {\bf{s}}}+c_{{\bf{q}}+{\bf{r}}, {\bf{s}}}\,t^{{\bf{a}}})\hspace{-0.1cm}+\hspace{-0.1cm} ({\bf{z}}|{\bf{s}})(f_{{\bf{r}}, {\bf{q}}+ {\bf{s}}}+c_{{\bf{r}}, {\bf{q}}+ {\bf{s}}}\,t^{{\bf{a}}}) \hspace{-0.05cm}- \hspace{-0.05cm} c_{{\bf{r}}, {\bf{s}}}\,({\bf{z}}|{\bf{r}}+{\bf{s}})\,t^{{\bf{a}}} \hspace{-0.1cm}                                     \bigg)^L \hspace{-0.15cm}u_L \,\,\, (\text{mod}\,\overline{V})\\
				\equiv \, & (t^{-\bf{a}})^L\,\bigg( ({\bf{z}}|{\bf{r}})\,c_{{\bf{q}}+{\bf{r}}, {\bf{s}}}+ ({\bf{z}}|{\bf{s}})\,c_{{\bf{r}}, {\bf{q}}+ {\bf{s}}} -  c_{{\bf{r}}, {\bf{s}}}\,({\bf{z}}|{\bf{r}}+{\bf{s}})                                \bigg)^L(t^{\bf{a}})^L\,u_L \quad \quad (\text{mod}\,\overline{V})\\
			\end{aligned}
		\end{equation*}
		Using \eqref{a10}, we deduce that for every ${\bf{r}}, {\bf{s}}, {\bf{q}} \in \Z^n$,
		\begin{equation}\label{a11}
			({\bf{z}}|{\bf{r}})\,c_{{\bf{q}}+{\bf{r}}, {\bf{s}}}+ ({\bf{z}}|{\bf{s}})\,c_{{\bf{r}}, {\bf{q}}+ {\bf{s}}} =  ({\bf{z}}|{\bf{r}}+{\bf{s}})\,c_{{\bf{r}}, {\bf{s}}} \quad \quad \forall \,{\bf{z}} \in \C^n \,\,\, \text{with}\,\,\, ({\bf{z}}|{\bf{q}})=0.
		\end{equation}
		
		\noindent\textbf{Claim:} $c_{{\bf{r}}, {\bf{s}}}=c$ for all ${\bf{r}}, {\bf{s}} \in \Z^n$.\vspace{0.1cm}\\
		First, let ${\bf q}\notin \Q{\bf s}$ and choose ${\bf z}\in \C^n$ such that $({\bf{z}}|{\bf{q}})=0$ but $({\bf{z}}|{\bf{s}})\neq 0$. Taking ${\bf{r}}= j{\bf{s}}$, where $j \in \mathbb{Q}$ and $j{\bf{s}} \in \Z^n$ in \eqref{a11}, we obtain
		\begin{equation}\label{a12}
			j\,c_{{\bf{q}}+j{\bf{s}}, {\bf{s}}}+c_{j{\bf{s}}, {\bf{q}}+ {\bf{s}}} =  (j+1)c_{j{\bf{s}}, {\bf{s}}} \quad \quad \forall\, {\bf{q}} \notin \mathbb{Q}\,{\bf{s}}.
		\end{equation}
		Putting $j=1$ in \eqref{a12}, we get
		\begin{equation*}
			c_{{\bf{q}}+{\bf{s}}, {\bf{s}}}=c_{{\bf{s}}, {\bf{s}}}, \quad \quad \forall\, {\bf{q}} \notin \mathbb{Q}\,{\bf{s}}.
		\end{equation*}
		As $({\bf{q}}-{\bf{s}}) \notin \mathbb{Q}\,{\bf{s}}$, replacing ${\bf{q}}$ by ${\bf{q}}-{\bf{s}}$, we obtain $c_{{\bf{q}}, {\bf{s}}}=c_{{\bf{s}}, {\bf{s}}}$ for all ${\bf{q}} \notin \mathbb{Q}\,{\bf{s}}$.
		Interchanging ${\bf{q}}$ and ${\bf{s}}$, we have
		\begin{equation}\label{a14}
			c_{{\bf{q}}, {\bf{s}}}=c_{{\bf{s}}, {\bf{s}}}=c_{{\bf{q}}, {\bf{q}}}, \quad \quad \forall\, {\bf{q}} \notin \mathbb{Q}\,{\bf{s}}.
		\end{equation}
		Also, taking ${\bf r}=-({\bf q}+{\bf s})$ in \eqref{a11}, we get
		\begin{equation}\label{a13}
			\lambda_{-{\bf{s}}, {\bf{s}}}=\lambda_{-({\bf{q}}+{\bf{s}}), ({\bf{q}}+{\bf{s}})} \quad \quad \forall\, {\bf{q}} \notin \mathbb{Q}\,{\bf{s}}.
		\end{equation}
		Again, replacing  ${\bf{q}}$ by ${\bf{q}}-{\bf{s}}$ in \eqref{a13}, we get 
		\begin{equation}\label{a51}
			c_{-{\bf{s}}, {\bf{s}}}=c_{-{\bf{q}}, {\bf{q}}}, \quad \quad \forall\, {\bf{q}} \notin \mathbb{Q}\,{\bf{s}}.
		\end{equation}
		Now for any given ${\bf{q}}, {\bf{s}} \in \Z^n \setminus \{\bf 0\}$, choose ${\bf{k}} \in \Z^n$ such that ${\bf{k}} \notin \text{Span}_{\C}\,\{{\bf{q}},{\bf{s}}\}$. Then \eqref{a14} and \eqref{a51} give
		\begin{equation}\label{a15}
			c_{{\bf{s}}, {\bf{s}}}=c_{{\bf{k}}, {\bf{k}}}=c_{{\bf{q}}, {\bf{q}}}, \quad \quad c_{-{\bf{s}}, {\bf{s}}}=c_{-{\bf{k}}, {\bf{k}}}=c_{-{\bf{q}}, {\bf{q}}} \quad \quad \forall {\bf{q}}, {\bf{s}} \in \Z^n \setminus \{\bf{0}\}. 
		\end{equation}
		Using \eqref{a14}, \eqref{a15}, we have
		\begin{equation}\label{a53}
			c_{{\bf{q}}+j{\bf{s}}, {\bf{s}}}=c_{{\bf{s}}, {\bf{s}}}, \quad \text{and} \quad c_{j{\bf{s}}, {\bf{q}}+ {\bf{s}}}=c_{j{\bf{s}}, j{\bf{s}}}=c_{{\bf{s}}, {\bf{s}}}, \quad \forall\, {\bf{q}} \notin \mathbb{Q}\,{\bf{s}}.
		\end{equation}
		Substituting \eqref{a53} into \eqref{a12}, we get
		\begin{equation}\label{a16}
			c_{j{\bf{s}}, {\bf{s}}}=c_{{\bf{s}}, {\bf{s}}} \quad \quad \forall \,j \in \mathbb{Q}\setminus \{-1\}\,\, \text{with}\,\, j{\bf{s}} \in \Z^n.
		\end{equation}
		Since we are assuming $c_{{\bf{e}_1}, {\bf{e}_1}}=c_{{\bf{e}_1}, {\bf{-e}}_1}$, by using \eqref{a14},\eqref{a15},\eqref{a16}, we finally have
		\begin{equation}
			c_{{\bf{q}}, {\bf{s}}}=c_{{\bf{s}}, {\bf{s}}} \quad \quad \forall\, {\bf{q}}, {\bf{s}} \in \Z^n \setminus \{\bf{0}\}.
		\end{equation}
		Now let $v_0 \in V_{\bf{0}}'$ and consider 
		\begin{equation}
			\begin{aligned}\label{a17}
				(t^{\bf{q}}\,t^{\bf{s}})(t^{-\bf{q}}\,t^{-\bf{s}})v_0=&c_{{\bf{q}}, {\bf{s}}}\,c_{-{\bf{q}}, -{\bf{s}}}\,t^{{\bf{q}}+{\bf{s}}}\,t^{-({\bf{q}}+{\bf{s}})}\,v_0\\
				=&c_{{\bf{q}}, {\bf{s}}}\,c_{-{\bf{q}}, -{\bf{s}}}\, c_{{\bf{q}}+{\bf{s}}, -({\bf{q}}+{\bf{s}})}\,t^{\bf{0}}\,v_0\\
				=&(c_{{\bf{s}}, {\bf{s}}})^3\,c\,v_0
			\end{aligned}
		\end{equation} 
		and \begin{equation}\label{a18}
			(t^{\bf{q}}\,t^{-\bf{q}})(t^{\bf{s}}\,t^{-\bf{s}})v_0= c_{{\bf{q}}, -{\bf{q}}}\,c_{{\bf{s}}, -{\bf{s}}}\,(t^{\bf{0}})^2\,v_0=(c_{{\bf{s}}, {\bf{s}}})^2\,(c)^2\,v_0
		\end{equation}
		Combining \eqref{a17} and \eqref{a18}, we obtain $c_{{\bf{q}}, {\bf{s}}}=c$ for all ${\bf{q}}, {\bf{s}} \in \Z^n$.
		Now it is easy to check that $W=\{v \in V: t^{\bf{q}}\,t^{\bf{s}}\,v= c\, t^{{\bf{q}}+{\bf{s}}}\,v, \,\, \forall {\bf{q}}, {\bf{s}} \in \Z^n \}$ is a nonzero $\EuScript{G}$-submodule. Therefore we are done, thanks to the irreducibility of $V$.
	\end{proof}
\end{theorem}	
\begin{theorem}\label{a90}
	Let $V$ be an irreducible cuspidal $(\mathcal{D}_n \ltimes \mathcal{A}_n)$-module such that $\mathcal{A}_n'$ acts nontrivially on $V$. Then
	$V$ is isomorphic to $V(\lambda, \boldsymbol{\alpha}, c)$
	for some dominant integral weight $\lambda$ of $\mathfrak{sl}_n$,  $\boldsymbol{\alpha} \in \C^n$, and $c \in \C^{*}$.
	\begin{proof}
		By Theorem \ref{a19}, it follows that the $\mathcal{A}_n$-action on $V$ is associative. Hence $V$ satisfies the third condition in the definition of jet modules. Moreover, since each $t^{\bf m}$ acts injectively on $V$, it follows that $V$ is a free $\mathcal{A}_n$-module of finite rank. Therefore, $V$ is a jet module. Applying Theorem 5.2 of \cite{bta}, we conclude that
		$V \cong V(\lambda, \boldsymbol{\alpha}, c)$
		for some dominant integral weight $\lambda$ of $\mathfrak{sl}_n$,  $\boldsymbol{\alpha} \in \C^n$, and $c \in \C^{*}$.
	\end{proof}
\end{theorem}

\begin{cor}
	Let $V$ be an irreducible cuspidal module over $\EuScript{G}$. Then we have
	\begin{enumerate}
		\item $P(V)= \mu + \Z^n$ for some $\mu \in \mathcal{H}^*$.
		\item $\dim\, V_{\mu_1}=\dim\, V_{\mu_2}$ for all $\mu_1, \mu_2 \in P(V)$.
	\end{enumerate}
\end{cor}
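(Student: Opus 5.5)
Throughout this section $V$ denotes an irreducible cuspidal $\La$-module on which $\mathcal{A}_n'$ acts nontrivially, and I will prove the corollary under that standing assumption. Without it the statement can fail: a trivial one-dimensional module has a single weight. I plan to give two arguments, one direct and one via the classification; the direct one uses only Propositions \ref{a6} and \ref{a7}.

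\emph{Direct argument.} Since $V$ is irreducible, the remark following the definition of jet modules gives some $\mu\in\mathcal{H}^*$ with $P(V)\subseteq \mu+\Z^n$. Because $\mathcal{A}_n'$ acts nontrivially, Proposition \ref{a7} rules out the locally nilpotent alternative in Proposition \ref{a6}. Hence every $t^{\bf r}$ with ${\bf r}\neq{\bf 0}$ acts injectively on $V$.

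For ${\bf r}\neq{\bf 0}$, the relation $[d_i,t^{\bf r}]=r_it^{\bf r}$ shows that $t^{\bf r}$ maps $V_{\mu'}$ into $V_{\mu'+{\bf r}}$. Injectivity then gives $\dim V_{\mu'+{\bf r}}\ge \dim V_{\mu'}$ for every $\mu'\in\mathcal{H}^*$ and every ${\bf r}\in\Z^n$; for ${\bf r}={\bf 0}$ this is trivial. Applying the inequality with $-{\bf r}$ to $\mu'+{\bf r}$ gives the reverse inequality. So $\dim V_{\mu'}=\dim V_{\mu'+{\bf r}}$ for all ${\bf r}\in\Z^n$, which is part (2).

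For part (1), pick any weight $\mu_0\in P(V)$, which exists since $V\neq 0$. Then $\mu_0+\Z^n=\mu+\Z^n$, and every weight space $V_{\mu_0+{\bf r}}$ has dimension $\dim V_{\mu_0}>0$. Therefore $P(V)=\mu+\Z^n$.

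\emph{Via the classification.} By Theorem \ref{a90}, $V\cong V(\lambda,\boldsymbol{\alpha},c)$. In $V(\lambda,\boldsymbol{\alpha},c)$ the weight space of weight $\boldsymbol{\alpha}+{\bf s}$ is exactly $V(\lambda)\otimes t^{\bf s}$. So $P(V)=\boldsymbol{\alpha}+\Z^n$, and every weight space has dimension $\dim V(\lambda)$.

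Neither route has a real obstacle. The only points needing care are recording that the nontriviality hypothesis is in force, and checking that $t^{\bf r}$ shifts weights by ${\bf r}$.
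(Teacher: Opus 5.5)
Your proof is correct, and you are right about the hypothesis. The corollary as printed omits the nontrivial $\mathcal{A}_n'$-action, which comes only from the standing assumption of Section~\ref{a94}, and the one-dimensional trivial module shows that (1) fails without it.

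The paper gives no separate proof. The corollary is placed directly after Theorem~\ref{a90} and is meant to be read off from $V\cong V(\lambda,\boldsymbol{\alpha},c)$, which is your second route.

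Your direct route is genuinely different and more economical. It uses only the injectivity of every $t^{\bf r}$, ${\bf r}\neq{\bf 0}$, from Propositions~\ref{a6} and~\ref{a7}. It therefore bypasses both the associativity result (Theorem~\ref{a19}) and the external Billig--Talboom classification on which Theorem~\ref{a90} rests. The paper in fact makes the same observation in passing inside the proof of Theorem~\ref{a19}, where it notes that all $V_{\bf r}$ have the common dimension $L$.

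What the classification route buys is sharper information. It identifies the weight coset as $\boldsymbol{\alpha}+\Z^n$ and the common dimension as $\dim V(\lambda)$, whereas the direct argument shows only that the dimensions agree.

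The one step you leave implicit in (2) is that $\mu_2-\mu_1\in\Z^n$. This follows from your opening observation $P(V)\subseteq\mu+\Z^n$.
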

\section{Generalized Highest weight modules}\label{a95}
In this section, we define generalized highest weight modules for the Lie algebra $\EuScript{G}$ and show that every irreducible Harish-Chandra module over $\EuScript{G}$ is either cuspidal or a generalized highest weight module. We begin by fixing some notation.

\vspace{0.3cm}
\noindent\textbf{Notation.}
For ${\bf m}, {\bf k} \in \Z^n$, we write ${\bf m} \geq {\bf k}$ if $m_i \geq k_i$ for all $1 \leq i \leq n$. For integers $p,q$, we set $[p,q]= \{k \in \Z \mid p \leq k \leq q\}$.
The intervals $(\infty, p]$ and $[q, \infty)$ are defined similarly.

\begin{definition}
	A $\EuScript{G}$-module $V$ is called a generalized highest weight (GHW) module if there exists a nonzero vector $v \in V_{\lambda}$ and $k \in \N$ such that 
	\begin{center}
		$\EuScript{G}_{{\bf{m}}}.v=0$ \,\,\,\, for all \, ${\bf{m}} \geq (k,k,\ldots,k)$.
	\end{center} 
	Such a vector $v$ is called a GHW vector with GHW $\lambda$.
	
Alternatively, we have the following equivalent definition of a GHW module. Suppose there exists a $\Z$-basis $B = \{{\bm{\alpha}}_1, {\bm{\alpha}}_2, \ldots, {\bm{\alpha}}_n\}$ of $\Z^n$ and a nonzero vector $v \in V_{\lambda}$ such that
\begin{center}
	$\EuScript{G}_{\bf m} v = 0 \quad \text{for all } {\bf 0} \neq {\bf m} \in \sum_{i=1}^n \Z_+ {\bm{\alpha}}_i$.
\end{center}
Then $V$ is called a GHW module with GHW $\lambda$ with respect to the basis $B$.

\end{definition}
Using induction, one can easily prove the following lemma.
\begin{lemma}\label{a25}
	Let $V$ be an irreducible Harish-Chandra module over $\EuScript{G}$. If there exists a $\Z$-basis $\{{\bm{\al}}_1,\bm{\al}_2,\ldots,\bm{\al}_n\}$ of $\Z^n$ and a weight vector $v \in V$ such that $\EuScript{G}_{{\bm{\al}}_i}v=0$ for all $i=1,2,\ldots,n$, then $V$ is a GHW module up to a change of coordinates.
\end{lemma}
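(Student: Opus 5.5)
We apply the change of coordinates $\Phi$ attached to the $\Z$-basis $\{\bm{\al}_1,\ldots,\bm{\al}_n\}$. Since $\Phi$ maps $\EuScript{G}_{{\bf e}_i}$ onto $\EuScript{G}_{\bm{\al}_i}$, twisting the module by $\Phi$ reduces us to the case $\bm{\al}_i={\bf e}_i$. The twisted module is again an irreducible Harish-Chandra module, because $\Phi$ preserves $\mathcal{H}$ up to an invertible linear change. So we may assume $\EuScript{G}_{{\bf e}_i}v=0$ for $1\leq i\leq n$. The goal is then to show $\EuScript{G}_{\bf m}v=0$ for every ${\bf 0}\neq{\bf m}\in\Z_+^n$. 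This gives a GHW vector with respect to the standard basis, hence in particular $\EuScript{G}_{\bf m}v=0$ for ${\bf m}\geq(1,\ldots,1)$.

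We argue by induction on $|{\bf m}|=m_1+\cdots+m_n$. The case $|{\bf m}|=1$ is the hypothesis. Let $|{\bf m}|\geq 2$ and pick $i$ with $m_i\geq 1$, so that ${\bf m}={\bf m}'+{\bf e}_i$ with ${\bf m}'\in\Z_+^n\setminus\{{\bf 0}\}$. By induction, $\EuScript{G}_{{\bf m}'}v=0$ and $\EuScript{G}_{{\bf e}_i}v=0$, so $[\EuScript{G}_{{\bf m}'},\EuScript{G}_{{\bf e}_i}]v=0$. Hence it suffices to show $[\EuScript{G}_{{\bf m}'},\EuScript{G}_{{\bf e}_i}]=\EuScript{G}_{\bf m}$, or at least that $\EuScript{G}_{\bf m}$ is spanned by brackets $[\EuScript{G}_{\bf a},\EuScript{G}_{\bf b}]$ with ${\bf a},{\bf b}\in\Z_+^n\setminus\{{\bf 0}\}$ and ${\bf a}+{\bf b}={\bf m}$. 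The latter weaker statement is enough, since all such pairs are already handled by induction.

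The main step is this spanning claim. By \eqref{a24}, $\EuScript{G}_{\bf m}$ is spanned by $t^{\bf m}$ and the $D({\bf u},{\bf m})$ with $({\bf u}|{\bf m})=0$. For $t^{\bf m}$ we use $[D({\bf u},{\bf a}),t^{\bf b}]=({\bf u}|{\bf b})t^{\bf m}$. We need a decomposition ${\bf m}={\bf a}+{\bf b}$ with ${\bf b}\notin\Q{\bf a}$; Lemma \ref{a3} then provides ${\bf u}$ with $({\bf u}|{\bf a})=0$ and $({\bf u}|{\bf b})\neq0$. If ${\bf m}$ is not a multiple of some ${\bf e}_i$, we take ${\bf a}={\bf e}_i$ for some $i$ with $m_i\geq1$, and then ${\bf b}={\bf m}-{\bf e}_i\notin\Q{\bf e}_i$.

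For the divergence-zero part, \eqref{a45} gives
\[
[D({\bf p},{\bf a}),D({\bf q},{\bf b})]=D\big(({\bf p}|{\bf b}){\bf q}-({\bf q}|{\bf a}){\bf p},\,{\bf m}\big),\qquad ({\bf p}|{\bf a})=({\bf q}|{\bf b})=0.
\]
We must show these vectors span the hyperplane ${\bf m}^\perp$. This is linear algebra: choosing ${\bf p}\perp{\bf a}$ and ${\bf q}\perp{\bf b}$ freely, and varying the splitting ${\bf m}={\bf a}+{\bf b}$ (for instance ${\bf a}={\bf e}_i$ for different admissible $i$), one checks that the span is all of ${\bf m}^\perp$ when $n\geq3$.

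The hard case, and the one I expect to be the real obstacle, is ${\bf m}=k{\bf e}_i$ with $k\geq2$. Here every splitting is collinear, ${\bf a}=a{\bf e}_i$ and ${\bf b}=b{\bf e}_i$, so the brackets above collapse. In particular $[D({\bf u},a{\bf e}_i),t^{b{\bf e}_i}]=bu_it^{\bf m}=0$, because $u_i=0$ is forced by $({\bf u}|a{\bf e}_i)=0$. To get around this, I would pass through non-positive directions, using vectors annihilated via the irreducibility and weight structure, or use a two-step bracket such as $[\EuScript{G}_{{\bf e}_i+{\bf e}_j},\EuScript{G}_{(k-1){\bf e}_i-{\bf e}_j}]$. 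If that fails, the fallback is to prove the weaker conclusion $\EuScript{G}_{\bf m}v=0$ for all ${\bf m}\geq(1,\ldots,1)$ directly; this only ever uses ${\bf m}$ with all coordinates positive. For such ${\bf m}$ with $|{\bf m}|>n$, splitting off some ${\bf e}_i$ never produces a collinear pair, so the induction above runs entirely within the non-collinear region. This yields the GHW property after the change of coordinates.
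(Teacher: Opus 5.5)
Your strategy is the right one: twist by $\Phi$ so that $\bm{\al}_i={\bf e}_i$, then kill $\EuScript{G}_{\bf m}v$ by induction on $|{\bf m}|$ using $[\EuScript{G}_{{\bf e}_i},\EuScript{G}_{{\bf m}-{\bf e}_i}]=\EuScript{G}_{\bf m}$. It is presumably what the paper's one-line ``by induction'' refers to. The spanning claim holds for a single splitting and any $n\geq 2$. Suppose ${\bf b}={\bf m}-{\bf e}_i\notin\Q{\bf e}_i$, and pick $j\neq i$ with $b_j\neq 0$. Then ${\bf u}={\bf e}_j$ gives $t^{\bf m}$. Pairs with ${\bf p}\in{\bf e}_i^{\perp}\cap{\bf b}^{\perp}$ and $q_i=1$ give all of ${\bf e}_i^{\perp}\cap{\bf b}^{\perp}$. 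One further pair with $({\bf p}|{\bf b})=q_i=1$ gives a vector outside it, so you obtain all of ${\bf m}^{\perp}$.

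However, the argument as written does not close, in three places.
\begin{enumerate}
\item The rays $k{\bf e}_i$, $k\geq 2$, are out of reach. Every decomposition of $k{\bf e}_i$ inside $\Z_+^n$ is collinear, and $\bigoplus_{k\neq 0}\EuScript{G}_{k{\bf e}_i}$ is abelian. Your two-step bracket would need $\EuScript{G}_{(k-1){\bf e}_i-{\bf e}_j}v=0$, which you do not have. So your primary goal should be abandoned, not patched; the fallback is the correct target, since the first definition of GHW only needs ${\bf m}\geq(1,\ldots,1)$.
\item The fallback is misdescribed. An induction that ``only ever uses ${\bf m}$ with all coordinates positive'' has no base case. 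Indeed, $(1,\ldots,1)$ decomposes in $\Z_+^n$ only into vectors with zero entries, and the hypothesis only covers the ${\bf e}_i$.
\item Your rule of splitting off an arbitrary ${\bf e}_i$ with $m_i\geq 1$ can land on an excluded ray. For example, ${\bf m}={\bf e}_1+2{\bf e}_2$ with $i=1$ leaves $2{\bf e}_2$.
\end{enumerate}

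The repair is to run the induction on $|{\bf m}|$ over
$T=\Z_+^n\setminus\big(\{{\bf 0}\}\cup\{k{\bf e}_i: k\geq 2,\ 1\leq i\leq n\}\big)$, with the ${\bf e}_i$ as base case. Any ${\bf m}\in T$ with $|{\bf m}|\geq 2$ has at least two nonzero entries. Subtract ${\bf e}_i$ for an $i$ with $m_i\geq 2$ if one exists, and for any $i$ in the support otherwise. Then ${\bf m}-{\bf e}_i\in T$ and ${\bf m}-{\bf e}_i\notin\Q{\bf e}_i$, so the spanning computation applies and gives $\EuScript{G}_{\bf m}v=0$. Since $T$ contains every ${\bf m}\geq(1,\ldots,1)$, the twisted module is GHW with $k=1$, which is exactly the statement of the lemma.
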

\begin{theorem}\label{a75}
	Every nontrivial irreducible Harish-Chandra module over $\EuScript{G}$ is either cuspidal or a GHW module.
	\begin{proof}
		Let $V$ be a nontrivial irreducible Harish-Chandra $\EuScript{G}$-module and assume that $V$ is not a GHW module. We will show that $V$ must be cuspidal. Since $V$ is irreducible,
		$V=\bigoplus_{{\bf m}\in\Z^n}V_{\lambda+{\bf m}}$ for some $\lambda \in \mathcal{H}^*$. Let ${\bf m}=(m_1,m_2,\ldots,m_n)$ and set $V_{\bf m}:=V_{\lambda+{\bf m}}$. For $1 \leq i < j \leq n$, consider the linear maps
		\begin{equation*}
			\begin{aligned}
				t^{-m_1{\bf{e}}_1+{\bf e}_2}: V_{\bf{m}} \rightarrow V_{(0,m_2+1, m_3,\ldots, m_n)}, \quad \quad D_{ij}(-m_1{\bf{e}}_1+{\bf e}_2): V_{\bf{m}} \rightarrow V_{(0,m_2+1, m_3,\ldots, m_n)}\\
				t^{(1-m_1){\bf{e}}_1+{\bf e}_2}: V_{\bf{m}} \rightarrow V_{(1,m_2+1, m_3,\ldots, m_n)}, \quad D_{ij}((1-m_1){\bf{e}}_1+{\bf e}_2): V_{\bf{m}} \rightarrow V_{(1,m_2+1, m_3,\ldots, m_n)}\\
				t^{-m_1{\bf{e}}_1+{\bf e}_k}: V_{\bf{m}} \rightarrow V_{(0,m_2,\ldots ,m_k+1,\ldots, m_n)}, \quad \quad D_{ij}(-m_1{\bf{e}}_1+{\bf e}_k): V_{\bf{m}} \rightarrow V_{(0,m_2,\ldots ,m_k+1,\ldots, m_n)}
			\end{aligned}
		\end{equation*}
		for each $k=3,\ldots,n$. Since $\{-m_1{\bf{e}}_1+{\bf e}_2,  (1-m_1){\bf{e}}_1+{\bf e}_2, -m_1{\bf{e}}_1+{\bf e}_k: 3 \leq k \leq n\}$ is a $\Z$-basis for $\Z^n$ and $V$ is not a GHW module, Lemma \ref{a25} implies that the intersection of the kernels of the above maps is zero. Therefore, we have
		\begin{equation*}
			\textrm{dim}\, V_{\bf{m}} \,\leq (\frac{n^2-n}{2}+1)\, \Big(\sum_{k=0}^{1}\textrm{dim}\,V_{(k,m_2+1, m_3,\ldots, m_n)}+ \sum_{k=3}^{n}\textrm{dim}\,V_{(0,m_2, m_3,\ldots,m_k+1,\ldots, m_n)}\Big).
		\end{equation*}
		Now let ${\bf{m}}':=(0,m_2+1, m_3,\ldots, m_n)$ and concentrate on $V_{{\bf{m}}'}$. By the same argument, using the maps
		\begin{equation*}
			\begin{aligned}
				&t^{{\bf{e}}_1-m_2{\bf e}_2}: V_{\bf{m}'} \rightarrow V_{(1,1, m_3,\ldots, m_n)}, \qquad \qquad \qquad D_{ij}({\bf{e}}_1-m_2{\bf e}_2): V_{\bf{m}'} \rightarrow V_{(1,1, m_3,\ldots, m_n)}\\
				&t^{{\bf{e}}_1+(1-m_2){\bf e}_2}: V_{\bf{m}'} \rightarrow V_{(1,2, m_3,\ldots, m_n)}, \qquad \qquad D_{ij}({\bf{e}}_1+(1-m_2){\bf e}_2): V_{\bf{m}'} \rightarrow V_{(1,2, m_3,\ldots, m_n)}\\
				&t^{-m_2{\bf{e}}_2+{\bf e}_k}: V_{\bf{m}'} \rightarrow V_{(0,1, m_3,\ldots, m_k+1,\ldots,m_n)}, \quad \quad D_{ij}(-m_2{\bf{e}}_2+{\bf e}_k): V_{\bf{m}'} \rightarrow V_{(0,1, m_3,\ldots, m_k+1, \ldots, m_n)}
			\end{aligned}
		\end{equation*}
		for each $k=3,\ldots,n$, we obtain
		\begin{equation*}
			\textrm{dim}\, V_{\bf{m}'} \leq (\frac{n^2-n}{2}+1)\, \Big(\sum_{k=1}^{2}\textrm{dim}\,V_{(1,k, m_3,\ldots, m_n)}+ \sum_{k=3}^{n}\textrm{dim}\,V_{(0,1, m_3,\ldots,m_k+1,\ldots, m_n)}\Big).
		\end{equation*}
		Repeating this process finitely many times, we see that $\dim V_{\bf m}$ is bounded above by a fixed constant independent of ${\bf m}$ and that completes the proof.
	\end{proof}
\end{theorem}

We now discuss some important properties of GHW modules. The following lemma can be proved by arguments similar to those in Lemmas 3.3 and 3.4 of \cite{maz}.
\begin{lemma}\label{a23}
	Let $V$ be an irreducible GHW module over $\EuScript{G}$. Then the following hold:
	\begin{enumerate}
		\item Every nonzero vector of $V$ is a GHW vector.\vspace{0.1cm}
		\item For any nonzero vector $v \in V$ and ${\bf m} \in \N^n$, we have $\EuScript{G}_{-{\bf m}}v \neq \{0\}$.
	\end{enumerate}
\end{lemma}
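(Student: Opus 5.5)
For part (1), I would follow the standard argument of Mazorchuk--Zhao. Let $v\in V_\lambda$ be a GHW vector, so there is $k\in\N$ with $\La_{\bf m}v=0$ for all ${\bf m}\geq (k,\ldots,k)$. Set
\[
N:=\{w\in V:\ \exists\, k_w\in\N \text{ with } \La_{\bf m}w=0 \text{ for all } {\bf m}\geq (k_w,\ldots,k_w)\}.
\]
The goal is to show that $N$ is a $\La$-submodule of $V$. Since $N$ contains $v\neq 0$, irreducibility will then give $N=V$, which is exactly (1) (each homogeneous component of a GHW vector is again GHW, so weight vectors suffice).

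To prove that $N$ is a submodule, take $w\in N$ with constant $k_w$ and $x\in\La_{\bf p}$ homogeneous. For $y\in \La_{\bf m}$ we have
\[
y x w=[y,x]w+x y w,\qquad [y,x]\in\La_{{\bf m}+{\bf p}}.
\]
Choose $k'$ with $k'\geq k_w$ and $k'+p_i\geq k_w$ for all $i$. Then for ${\bf m}\geq (k',\ldots,k')$ both ${\bf m}$ and ${\bf m}+{\bf p}$ are $\geq (k_w,\ldots,k_w)$, so both terms vanish. Hence $xw\in N$. Closure under sums follows by taking the maximum of the constants, and weight components are preserved because each $\La_{\bf m}$ is $\mathcal{H}$-homogeneous. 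This part is routine.

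For part (2), suppose to the contrary that $v\neq 0$, ${\bf m}\in\N^n$, and $\La_{-{\bf m}}v=0$. By (1) we may assume $v$ is a weight vector with $\La_{\bf k}v=0$ for all ${\bf k}\geq (k,\ldots,k)$. The plan is to build a cone of annihilating degrees. Consider
\[
S=\{{\bf s}\in\Z^n:\ \La_{\bf s}v=0\},
\]
which contains $-{\bf m}$ and the shifted orthant $\{{\bf k}\geq (k,\ldots,k)\}$. For ${\bf s}_1,{\bf s}_2\in S$, we have $[\La_{{\bf s}_1},\La_{{\bf s}_2}]v=0$. So the key input is the claim that $[\La_{{\bf a}},\La_{{\bf b}}]=\La_{{\bf a}+{\bf b}}$ for "generic" ${\bf a},{\bf b}$. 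Concretely, $[D_{ij}({\bf a}),t^{\bf b}]=(a_jb_i-a_ib_j)t^{{\bf a}+{\bf b}}$ gives $t^{{\bf a}+{\bf b}}$ whenever ${\bf a},{\bf b}$ are not proportional. I would also verify that the $D_{ij}({\bf a}+{\bf b})$ lie in the span of $[D(\cdot,{\bf a}),D(\cdot,{\bf b})]$ for non-proportional ${\bf a},{\bf b}$, using bracket \eqref{a45} with suitable ${\bf u}\perp{\bf a}$ and ${\bf q}\perp{\bf b}$.

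Granting this, $S$ is closed under adding non-proportional pairs. Adding multiples of $-{\bf m}$ to elements of the orthant then shows that $S$ contains $\{{\bf k}+j(-{\bf m})\}$ for large ranges of $j$. Because every coordinate of $-{\bf m}$ is negative, these translates, together with generic sums, fill out all of $\Z^n$ outside a finite union of lines. It follows that $\La_{\bf s}v=0$ for all ${\bf s}$ off finitely many rays. One more round of brackets, writing each remaining ${\bf s}$ as a sum of two non-proportional elements of $S$, gives $\La_{\bf s}v=0$ for all ${\bf s}\neq{\bf 0}$.

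Then $\mathcal{U}(\La)v=\mathcal{U}(\La_{\bf 0})v$ is contained in a single weight space, so $V$ is finite-dimensional, concentrated in one weight. On it, $t^{\bf r}=0$ for ${\bf r}\neq{\bf 0}$ and $D_{ij}({\bf r})$ act trivially. So $V$ is the trivial module up to $\mathcal{H}$ and $t^{\bf 0}$, which contradicts its being a GHW module with $\La_{-{\bf m}}$ essential (the nontriviality that the lemma presupposes).

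The main obstacle is the generation step, $[\La_{\bf a},\La_{\bf b}]\supseteq\La_{{\bf a}+{\bf b}}$ for non-proportional ${\bf a},{\bf b}$ on the $D_{ij}$-part, together with the combinatorics showing that $S$ eventually exhausts $\Z^n\setminus\{{\bf 0}\}$. I would first try to get the $t$-part alone, $t^{\bf s}v=0$ for all ${\bf s}\neq{\bf 0}$, and then recover the $D$-part via $[D_{ij}({\bf a}),D_{kl}({\bf b})]$, using relation \eqref{a70} to control degeneracies.
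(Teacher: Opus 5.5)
Your proof of (1) is the Mazorchuk--Zhao submodule argument the paper cites, and it is fine. Your route for (2) is the matching generation argument, and the step you flag as the main obstacle is true and short. Take linearly independent ${\bf a},{\bf b}$. Choosing ${\bf u}\perp{\bf a}$ with $({\bf u}|{\bf b})\neq 0$ gives $t^{{\bf a}+{\bf b}}$. For ${\bf p}\perp{\bf a}$, ${\bf q}\perp{\bf b}$ one has $[D({\bf p},{\bf a}),D({\bf q},{\bf b})]=D\big(({\bf p}|{\bf b}){\bf q}-({\bf q}|{\bf a}){\bf p},\,{\bf a}+{\bf b}\big)$. Taking ${\bf p}\in{\bf a}^\perp\cap{\bf b}^\perp$ and $({\bf q}|{\bf a})\neq 0$ produces the $(n-2)$-dimensional space ${\bf a}^\perp\cap{\bf b}^\perp$. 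Taking $({\bf p}|{\bf b})=({\bf q}|{\bf a})=1$ produces ${\bf q}-{\bf p}\in({\bf a}+{\bf b})^\perp$ outside it, so $[\La_{\bf a},\La_{\bf b}]=\La_{{\bf a}+{\bf b}}$. The combinatorics is also easier than you expect. Descending from the orthant along ${\bf s}+j{\bf m}$ puts every ${\bf s}\notin\Q{\bf m}$ in $S$. Any ${\bf s}\in\Q{\bf m}\setminus\{{\bf 0}\}$ equals ${\bf x}+({\bf s}-{\bf x})$ with ${\bf x}\notin\Q{\bf m}$, and these two summands are independent. Then $\{w\in V:\La_{\bf s}w=0\ \forall\,{\bf s}\neq{\bf 0}\}$ is a nonzero submodule, hence all of $V$. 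For a weight vector $v$ this gives $V=\C[t^{\bf 0}]v$, which is one-dimensional by irreducibility; no finiteness of weight spaces is needed.

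The gap is your final sentence: at that point there is no contradiction with the hypotheses as stated. Appealing to a nontriviality the lemma ``presupposes'' hides the fact that (2) is false as written. For ${\bf u},{\bf q}\perp{\bf s}$ one has $[D({\bf u},{\bf s}),D({\bf q},-{\bf s})]=0$ and $[D({\bf u},{\bf s}),t^{-{\bf s}}]=0$, so $[\La,\La]=\La':=\bigoplus_{{\bf s}\neq{\bf 0}}\La_{\bf s}$. Hence every character of the abelian algebra $\La_{\bf 0}=\mathcal H\oplus\C t^{\bf 0}$, extended by zero on $\La'$, gives a one-dimensional irreducible GHW module that violates (2). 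It is not trivial in the paper's sense as soon as its weight or its $t^{\bf 0}$-scalar is nonzero. This is exactly where the divergence-zero case departs from the Witt case of Mazorchuk--Zhao: there $[\mathcal W_{\bf s},\mathcal W_{-{\bf s}}]$ spans $\mathcal H$, so the same argument forces the trivial module. What your argument actually proves is the correct statement: (2) holds under the extra hypothesis $\La'V\neq 0$, equivalently $\dim V>1$. This is automatic, for instance, when $\mathcal A_n'$ acts nontrivially. You should add this hypothesis explicitly rather than claim a contradiction.
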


\begin{lemma}\label{a76}
	Let $V$ be a nontrivial irreducible GHW module over $\EuScript{G}$. Then, for every $\mu \in P(V)$ and ${\bf m} \in \N^n$, there exists $l \in \Z_+$ such that
	\[
	\{k \in \Z : \mu + k{\bf m} \in P(V)\} = (-\infty, l].
	\]
	\begin{proof}
		Set $J := \{k \in \Z : \mu + k{\bf m} \in P(V)\}$.
		By Lemma \ref{a23}, either $J = (-\infty, l]$ for some $l \in \Z_+$ or $J = \Z$.  Suppose that $J = \Z$. Then $\mu + i{\bf m} \in P(V)$ for every $i \in \Z$. For each $i \in \Z$, choose $v_i \in V_{\mu + i{\bf m}}$. Since every nonzero vector of $V$ is a GHW vector, there exists $p_i \in \N$ such that
		\begin{center}
			$\EuScript{G}_{{\bf{q}}}\,v_i=0$ \,\,\,\,for all\,\,\, ${\bf{q}}> (p_i, p_i, \ldots, p_i)$.
		\end{center}
		Now choose a subsequence $(x_n)$ of $\N$ such that $x_{i+1} > x_i + p_{x_i} + 2$. For each fixed $i \in \N$, there exists $q_i \in \N$ such that
		\begin{equation}\label{a72}
			\EuScript{G}_{y{\bf m}+{\bf e}_1}v_{x_i} = 0 \quad \text{for all } y \geq q_i.
		\end{equation}
		On the other hand, Lemma \ref{a23}(2) implies that
		\begin{equation}\label{a73}
			\EuScript{G}_{y{\bf m}+{\bf e}_1}v_{x_i} \neq \{0\} \quad \text{for all } y < -1.
		\end{equation}
		Therefore, by \eqref{a70}, \eqref{a72}, \eqref{a73}, there exist $y_i \geq -2$ and $2 \leq i' \leq n$ such that
		\begin{equation*}
			\{D_{1i'}(y_i{\bf m}+{\bf e}_1)\,v_{x_i}, \, t^{y_i{\bf m}+{\bf e}_1}\,v_{x_i}\} \neq \{0\}\qquad \text{and} \qquad \EuScript{G}_{y{\bf m}+{\bf e}_1}v_{x_i} = 0, \quad \forall \,\,y > y_i.
		\end{equation*}
		Now define
		\begin{equation*}
			A_i =
			\begin{cases}
				t^{-x_i{\bf m}}, & \text{if}\,\,\, t^{y_i{\bf m}+{\bf e}_1}v_{x_i} \neq 0,\\[0.1cm]
				D_{1i'}(-x_i{\bf m}), & \text{if}\,\,\, t^{y_i{\bf m}+{\bf e}_1}v_{x_i} = 0.
			\end{cases}
		\end{equation*}
		We claim that the set $\{A_i v_{x_i} : i \in \N\}$ is linearly independent. Fix $r \in \N$, and suppose that
		$\sum_{i=1}^{r} c_i A_i v_{x_i} = 0$. Consider
		\begin{equation}\label{a71}
			0=D_{1r'}((x_r+y_r){\bf m}+{\bf e}_1) \Big(\sum_{i=1}^r c_iA_i\,v_{x_i}\Big)=\sum_{i=1}^r c_i\,[D_{1r'}((x_r+y_r){\bf m}+{\bf e}_1),\, A_i]\,v_{x_i}
		\end{equation}
		Note that
		\begin{equation*}
			\begin{aligned}
				[D_{1r'}((x_r+y_r){\bf m}+{\bf e}_1), t^{-x_r{\bf m}}]
				=& -x_r m_{r'}\, t^{y_r{\bf m}+{\bf e}_1},\\
				[D_{1r'}((x_r+y_r){\bf m}+{\bf e}_1), D_{1r'}(-x_r{\bf m})]
				=& -x_r m_{r'}\, D_{1r'}(y_r{\bf m}+{\bf e}_1),
			\end{aligned}
		\end{equation*}
		and for $i \neq r$, we have
		\begin{equation*}
			[D_{1r'}((x_r+y_r){\bf m}+{\bf e}_1), A_i] \in \EuScript{G}_{(x_r+y_r-x_i){\bf m}+{\bf e}_1}.
		\end{equation*}
		Since $x_r + y_r - x_i > p_{x_i}$, it follows that
		$[D_{1r'}((x_r+y_r){\bf m}+{\bf e}_1), A_i]v_{x_i} = 0$. Therefore, from \eqref{a71} and the choice of $y_r$, we obtain $c_r = 0$. Similarly, we can show that $c_{r-1}=\cdots=c_1=0$. This implies that $\dim V_{\mu}=\infty$, which is a contradiction. Therefore $J \neq \Z$, and consequently
		$J = (-\infty, l]$ for some $l \in \Z_+$.
	\end{proof}
\end{lemma}
\begin{lemma}\label{a30}
	Let $V$ be an irreducible Harish-Chandra GHW module over $\EuScript{G}$ which is not cuspidal. Then, after a suitable change of coordinates, the following statements hold:
	\begin{enumerate}
		\item $V$ is a GHW module with GHW $\lambda$ over $\EuScript{G}$ with respect to the standard basis of $\Z^n$.\vspace{0.1cm}
		\item $\lambda + {\bf m} \notin P(V)$ for all ${\bf 0} \neq {\bf m} \in \Z_+^n$.\vspace{0.1cm}
		\item $\lambda - {\bf m} \in P(V)$ for all ${\bf m} \in \Z_+^n$.\vspace{0.1cm}
		\item For any ${\bf m}, {\bf p} \in \Z^n$ satisfying ${\bf m} \leq {\bf p}$, the condition $\lambda + {\bf m} \notin P(V)$ implies $\lambda + {\bf p} \notin P(V)$.\vspace{0.1cm}
		\item For any ${\bf m}, {\bf p} \in \Z^n$ satisfying ${\bf m} \leq {\bf p}$, the condition $\lambda + {\bf p} \in P(V)$ implies $\lambda + {\bf m} \in P(V)$.\vspace{0.1cm}
		\item For any ${\bf 0} \neq {\bf m} \in \Z_+^n$ and ${\bf p} \in \Z^n$, we have
		\begin{equation*}
			\{k \in \Z : \lambda + {\bf l} + k{\bf m} \in P(V)\} = (-\infty, q], \,\,\,\, \text{for some}\,\, q \in \Z.
		\end{equation*}
	\end{enumerate}
	\begin{proof}
		By Theorem \ref{a75}, $V$ is a GHW module. Consequently, statements (1)-(5) can be proved by arguments analogous to those used in Lemma 3.6 of \cite{maz}. Statement (6) follows from Lemma \ref{a23} and Lemma \ref{a76}.
	\end{proof}
\end{lemma}
\section{Classification of irreducible GHW modules}\label{a96}
		In this section, we define highest weight modules for $\EuScript{G}$ with respect to a suitable triangular decomposition. Let
		$\EuScript{G} = \bigoplus_{{\bf m} \in \Z^n} \EuScript{G}_{\bf m}$
		be the graded decomposition given by \eqref{a24}. Let $M$ be a subgroup of $\Z^n$ and let $\boldsymbol{\beta} \in \Z^n$ be such that
		$\Z^n = M \oplus \Z \boldsymbol{\beta}$. With respect to this decomposition of $\Z^n$, we define a $\Z$-grading on $\EuScript{G}$ by
		\[
		\EuScript{G} = \bigoplus_{r \in \Z} \EuScript{G}_{_{M+r\boldsymbol{\beta}}},\,\,\,\, \text{where}\,\,\, \EuScript{G}_{_{M+r\boldsymbol{\beta}}}=\bigoplus_{{\bf m} \in M} \EuScript{G}_{{\bf m}+r\boldsymbol{\beta}}.
		\]
		This induces a triangular decomposition of $\EuScript{G}$, namely
		$\EuScript{G} = \EuScript{G}^{-}_M \oplus \EuScript{G}_M \oplus \EuScript{G}^{+}_M$, where
		\[
		\EuScript{G}^{-}_M = \bigoplus_{\substack{{\bf m} \in M \\ r \in \N}} \EuScript{G}_{{\bf m}-r\boldsymbol{\beta}}, 
		\qquad
		\EuScript{G}_M = \bigoplus_{{\bf m} \in M} \EuScript{G}_{\bf m},
		\qquad
		\EuScript{G}^{+}_M = \bigoplus_{\substack{{\bf m} \in M \\ r \in \N}} \EuScript{G}_{{\bf m}+r\boldsymbol{\beta}}.
		\]
		
		Using this triangular decomposition, we define highest weight modules for $\EuScript{G}$. Let $X$ be an irreducible module over $\EuScript{G}_M$. We extend $X$ to a $(\EuScript{G}_{_{M}} \oplus \EuScript{G}^{+}_M)$-module by letting $\EuScript{G}^{+}_M$ act trivially on $X$. The corresponding generalized Verma module for $\La$ is defined by
		\begin{center}
			$\mathbb{M}_{\EuScript{G}}(X,\boldsymbol{\beta}, M)= \textrm{Ind}_{\EuScript{G}_{_{M}} \oplus \EuScript{G}^{+}_M}^{\EuScript{G}} X= \mathcal{U}(\EuScript{G}) \otimes_{\EuScript{G}_{_{M}} \oplus \EuScript{G}^{+}_M}  X.$ 
		\end{center}
		As a vector space, $\mathbb{M}_{\EuScript{G}}(X,\boldsymbol{\beta}, M) \cong \mathcal{U}(\EuScript{G}^{-}_M) \otimes_{\C}  X$. By standard arguments, $\mathbb{M}_{\EuScript{G}}(X,\boldsymbol{\beta}, M)$ has a unique maximal submodule that intersects $X$ trivially. We denote the corresponding unique irreducible quotient by $\mathbb{L}_{\EuScript{G}}(X,\boldsymbol{\beta}, M)$. For simplicity, we write
		$\mathbb{L}(X,\boldsymbol{\beta}, M) := \mathbb{L}_{\EuScript{G}}(X,\boldsymbol{\beta}, M)$.
		
		Since $\EuScript{G}$ is $\Z^n$-extragraded, by a result of Billig and Zhao (\cite{bz}, Theorem 1.5), it is known that if $X$ is a uniformly bounded $\Z^{n-1}$-graded exp-polynomial $\uptau_{M}$-module, then 
		$\mathbb{L}(X,\boldsymbol{\beta}, M)$ becomes a Harish-Chandra module for $\EuScript{G}$.  We refer to Section 1 of \cite{bz} for more details on $\Z^n$-extragraded Lie algebras and $\Z^n$-graded exp-polynomial modules.
		
		\vspace{0.3cm}
		From now on, $V$ always denotes an irreducible nontrivial GHW module over $\EuScript{G}$ with GHW vector $v_{\Lambda}$ of weight $\Lambda$ with respect to the standard basis of $\C^n$. With this assumption, our main objective is to prove the following theorem.
		\begin{theorem}\label{a28}
			Let $V$ be an irreducible GHW module. Then $V \cong \mathbb{L}(X,\boldsymbol{\beta}, M)$ for some subgroup $M$ of $\Z^n$, $\boldsymbol{\beta} \in (\Z^n)^*$ such that $\Z^n= M \oplus \Z \boldsymbol{\beta}$, and irreducible $\EuScript{G}_M$-module $X$.
		\end{theorem}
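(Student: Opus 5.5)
The plan follows the strategy of Lu--Zhao and Guo--Liu for higher rank Virasoro-like algebras. We locate a rank $n-1$ direction in which $V$ has a ``top layer''. We then show that this layer is an irreducible $\EuScript{G}_M$-module $X$ annihilated by $\EuScript{G}^+_M$, and identify $V$ with $\mathbb{L}(X,\boldsymbol{\beta},M)$.

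\textbf{Step 1: choice of $M$ and $\boldsymbol{\beta}$.} By Lemma \ref{a30}, after a change of coordinates we may assume $V$ is GHW with respect to the standard basis, with the support properties (2)--(6). For each $\mathbf{p}$ and each direction, Lemma \ref{a30}(6) says the weights along $\lambda+\mathbf{p}+k\mathbf{m}$ are bounded above. From this we define the ``boundary'' function
\[
f(\underline{\mathbf{m}})=\max\{k\in\Z : \Lambda+k\mathbf{e}_1+(0,\underline{\mathbf{m}})\in P(V)\}.
\]
We then show, using the monotonicity statements (4) and (5), that the set of weights is cut out by a half-space whose boundary is a rational hyperplane. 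This is the analogue of the argument in \cite{lz}.

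Concretely, I would first do the case $n=2$ (as the paper announces, \Cref{a36}). Here one shows that $f$ is eventually linear: $f(m_2+1)-f(m_2)$ is bounded, since the weight spaces are finite dimensional and Lemma \ref{a23}(2) applies. One then shows it is constant along a rational slope. The hyperplane then gives a primitive vector $\boldsymbol{\beta}$ (up to sign) and $M=\ker$ of the corresponding functional. One arranges $\Z^n=M\oplus\Z\boldsymbol{\beta}$ with $P(V)\subseteq \Lambda'+M-\Z_+\boldsymbol{\beta}$ for some weight $\Lambda'$. For general $n$, I would induct. Restricting to the subalgebra attached to a rank $n-1$ sublattice containing the GHW directions gives, by the inductive hypothesis, a highest-weight structure there. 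Combining it with the boundedness in the remaining direction produces the rank $n-1$ subgroup $M$.

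\textbf{Step 2: the top layer.} Set
\[
X=\bigoplus_{\mathbf{m}\in M}V_{\Lambda'+\mathbf{m}}.
\]
By construction, $\EuScript{G}^+_M X\subseteq \bigoplus_{r>0}V_{\Lambda'+M+r\boldsymbol{\beta}}=0$ and $\EuScript{G}_M X\subseteq X$. Irreducibility of $V$ then gives $V=\mathcal{U}(\EuScript{G})X=\mathcal{U}(\EuScript{G}^-_M)X$, using the PBW theorem. The same argument gives that $X$ is an irreducible $\EuScript{G}_M$-module. Indeed, a proper nonzero $\EuScript{G}_M$-submodule $Y$ would generate $\mathcal{U}(\EuScript{G}^-_M)Y$, whose intersection with the top layer is $Y$, contradicting irreducibility. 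The universal property of induction then yields a surjection $\mathbb{M}(X,\boldsymbol{\beta},M)\to V$. Its kernel meets $X$ trivially, so $V\cong\mathbb{L}(X,\boldsymbol{\beta},M)$.

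\textbf{Main obstacle.} Step 2 is formal once Step 1 holds. The hard part is Step 1: proving that the support boundary is exactly a rational hyperplane, so that the top layer is nonzero and $\EuScript{G}^+_M$ kills it. We cannot rely on a Virasoro subalgebra as in \cite{lz}. Instead, I would use the elements $t^{\mathbf{m}}$ and $D_{ij}(\mathbf{m})$, together with relation \eqref{a70}, to transport weight vectors along the boundary. For $n=2$, I would first try to show that the increments $f(m_2+1)-f(m_2)$ are eventually periodic. I would derive this from finite dimensionality, in the spirit of the linear independence argument of Lemma \ref{a76}. Periodicity forces a rational slope, and the induction on $n$ then reduces the general case to this one.
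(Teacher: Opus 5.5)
Your Step~2 is fine and is essentially Proposition~\ref{a40} (and Proposition~\ref{a80} for $n=2$). One caveat: passing from a single weight-free coset to a genuine top layer $P(V)\subseteq\Lambda'+M-\Z_+\boldsymbol{\beta}$ needs the extra argument the paper borrows from Lu--Zhao.

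The gap is Step~1, already for $n=2$. You give no mechanism that excludes an \emph{irrational} asymptotic slope of the boundary of $P(V)$, and that is exactly the case the proof must kill. The boundedness of $f(m_2+1)-f(m_2)$ is asserted without justification. Even granted, it says nothing about rationality: the increments of $\lfloor \alpha m\rfloor$ are bounded for every $\alpha$ but never eventually periodic when $\alpha\notin\Q$. Nor does Lemma~\ref{a76} help. Its independence argument needs a whole line $\mu+\Z\mathbf{m}\subseteq P(V)$ (weights $\mu+x_i\mathbf{m}$ with $x_i\to+\infty$, pulled back by $A_i$). Near a half-plane boundary, every line meets $P(V)$ only in a half-line.

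The paper argues by contradiction. Suppose $V$ is not highest weight. Lemma~\ref{a29} says that a line whose support contains both tails, an empty line, or an out--in--out pattern each already gives a highest weight structure via Propositions~\ref{a40} and~\ref{a80}. This forces \eqref{a31}, whence (S1)--(S4): the slope $\alpha$ exists and is irrational, $<_{\alpha}$ is dense, and $P(V)$ is downward closed for $<_{\alpha}$. Density is then the key tool (Lemma~\ref{a35}, Theorem~\ref{a36}):
\begin{enumerate}
\item choose $\mathbf{b}$ with $0<_{\alpha}i\mathbf{b}<_{\alpha}\mathbf{a}$ for $i\le 4k$;
\item pass to an extremal weight $\gamma$, maximal along $\mathbf{a}$ and then along $\mathbf{b}$;
\item use Lemma~\ref{a32} to get one of $t^{-\mathbf{b}}t^{-\mathbf{b}}v$, $t^{-\mathbf{b}}d(-\mathbf{b})v$, $d(-\mathbf{b})d(-\mathbf{b})v$ nonzero;
\item apply $d(\mathbf{a}-2\mathbf{b})$ to show that the vectors $t^{-\mathbf{q}_i}t^{-\mathbf{p}_i}v$, with $\mathbf{p}_i=(2i-1)\mathbf{b}$, $\mathbf{q}_i=\mathbf{a}-\mathbf{p}_i$, $1\le i\le k$, are independent in $V_{\gamma-\mathbf{a}}$; extremality of $\gamma$ kills all terms but one.
\end{enumerate}
As $k$ is arbitrary, this contradicts finite-dimensionality. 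Some argument of this kind is what your Step~1 is missing.

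The inductive step is also too thin to work as sketched. The restriction of $V$ to $\EuScript{G}_{M_0}$, for a rank $n-1$ sublattice $M_0$, is not irreducible. So the inductive hypothesis applies only to irreducible subquotients of each layer $V_{\Lambda+\mathbf{m}+M_0}$. It then yields, for each layer separately, a splitting $M_0=\Z\mathbf{m}_{0,1}\oplus M_{0,1}$ with $\lambda_0'+M_{0,1}-\N\mathbf{m}_{0,1}\subseteq P(V)$, as in \eqref{a43}. Nothing in your sketch makes these splittings compatible across layers. The final $M$ also need not be related to the sublattice you restricted to, so ``combining with boundedness in the remaining direction'' does not produce it.

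In the paper the induction is again used for a contradiction. Either a coset of some corank-one subgroup essentially misses $P(V)$, and Proposition~\ref{a40} applies directly. Or \eqref{a43} holds on every layer, and the Lu--Zhao combinatorics then yields a contradiction: impossibility of the configuration \eqref{a42}, uniqueness of the complement $M_1$ in $\overline{M}_0$, and the half-space shape of $P(V_{\Lambda+\overline{M}_t})$.
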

		We need several preparatory results to prove this theorem. We begin with the following proposition, which will play a crucial role in this regard.
		
		\begin{prop}\label{a40}
			Let ${\bf{s}}, {\bf{k}} \in \Z^n$ such that $k_1, k_2,\ldots, k_n$ are relatively prime. Suppose there exists ${\bf{p}} \in \N^n$ such that 
			\begin{equation}
				\{ \Lambda + \sum_{i=1}^{n} s_i{\bf{e}_i} + \sum_{i=1}^{n} r_ip_i{\bf{e}_i} \in P(V): {\bf{r}} \in \Z^n,\,\, \sum_{i=1}^{n} k_ip_ir_i=0 \}= \emptyset.
			\end{equation}
			Then $V$ is isomorphic to $\mathbb{L}(X,\boldsymbol{\beta}, M)$ for some $X, \boldsymbol{\beta}$, and $M$ defined above.
			\begin{proof}
				Set $M=\{{\bf{l}} \in \Z^n: ({\bf{k}}|{\bf{l}})=0 \}$. Now, by following the proof of Lemma 3.3 (Claim 1) of \cite{lz}, we can show that there exists a unique integer $m_0$ such that
				\begin{equation}\label{a26}
					\begin{aligned}
						&\{ \Lambda + {\bf{l}} \in P(V): {\bf{l}} \in \Z^n, ({\bf{k}}|{\bf{l}}) \geq m_0 \}= \emptyset \,\,\,\,\,\,\, \text{and},\\
						Q=\,&\{ \Lambda + {\bf{l}} \in P(V): {\bf{l}} \in \Z^n, ({\bf{k}}|{\bf{l}})= m_0-1 \} \neq  \emptyset.			
					\end{aligned}
				\end{equation}
				Since $k_1, k_2, \ldots, k_n$ are relatively prime, there exist $\beta_1, \beta_2, \ldots, \beta_n \in \Z$ such that $\sum_{i=1}^{n} k_i \beta_i = 1$. Let $\boldsymbol{\beta}=(\beta_1,\ldots,\beta_n)$. If ${\bf m}\in\Z^n$ satisfies $({\bf k}\mid {\bf m})=j$, then ${\bf m}-j\boldsymbol{\beta}\in M$, and that gives $\Z^n=M\oplus \Z\boldsymbol{\beta}$. Moreover, \eqref{a26} implies that
				\begin{equation}\label{a27}
					(Q+M+\N\boldsymbol{\beta})\cap P(V)=\emptyset.
				\end{equation}
				Fix $\mu \in Q$. Then it is easy to check that 
				\begin{equation*}
					Q = (\mu + M) \cap P(V).
				\end{equation*}
				Let $\EuScript{G}=\EuScript{G}^{-}_M\oplus \EuScript{G}_M\oplus \EuScript{G}^{+}_M$ be the triangular decomposition with respect to $\Z^n=M\oplus \Z\boldsymbol{\beta}$. Now define $X= \oplus_{{\bf{m}} \in M} V_{\mu+{\bf{m}}}$, which is a module over $\EuScript{G}_M$. By \eqref{a27}, we have $\EuScript{G}^{+}_M \cdot X = 0$. Since $V$ is irreducible as a $\EuScript{G}$-module, an application of the PBW theorem together with weight considerations shows that $X$ is irreducible as a $\EuScript{G}_M$-module. Therefore,
				\[
				V \cong \mathbb{L}(X,\boldsymbol{\beta}, M).
				\]
			\end{proof}
		\end{prop}
We now deal with the case $n=2$ and prove Theorem \ref{a27} in this setting. So our next few results up to Theorem \ref{a36} are devoted to the special case $n=2$. For any ${\bf r}=(r_1,r_2) \in \Z^2$, set $\bar{\bf r}=(r_2,-r_1)$. Observe that, for $n=2$, the condition $({\bf u}|{\bf v})=0$ implies that ${\bf u}=\lambda \bar{\bf v}$ for some $\lambda\in\C$. For ${\bf b}\in\Z^2$, set
$d({\bf b})=D(\bar{\bf b},{\bf b})$.
Thus $\EuScript{G}$ is spanned by
$\{d({\bf r}),\,t^{\bf s},\,d_1,\,d_2 : {\bf r}\in\Z^2\setminus\{(0,0)\},\ {\bf s}\in\Z^2\}$.
\begin{prop}\label{a80}
	Suppose there exists a $\Z$-basis $\{{\bf b}_1,{\bf b}_2\}$ of $\Z^2$ and $\lambda\in P(V)$ such that
	\begin{equation*}
		(\lambda+\Z{\bf b}_1+\N{\bf b}_2)\cap P(V)=\emptyset.
	\end{equation*}
	Then $V \cong \mathbb{L}(X,{\bf b}_2,\Z{\bf b}_1)$ for some irreducible $\EuScript{G}_{\Z{\bf b}_1}$-module $X$.
	\begin{proof}
		Set $X=\bigoplus_{r\in\Z}V_{\lambda+r{\bf b}_1}$,\, $M=\Z{\bf b}_1$. By the hypothesis, we have $\EuScript{G}_M^{+}\cdot X=0$. We first show that $X$ is a cuspidal $\EuScript{G}_M$-module. For each $r\in\Z$, define
		\begin{equation*}
			\psi_r:V_{\lambda+r{\bf b}_1}\to V_{\lambda-{\bf b}_2}\oplus V_{\lambda-{\bf b}_2}\quad \text{by} \quad
			\psi_r(v)=\big(d(-r{\bf b}_1-{\bf b}_2)v,\ t^{-r{\bf b}_1-{\bf b}_2}v\big).
		\end{equation*}
		Since $\Z^2$ is generated by $\{(r+1){\bf b}_1+{\bf b}_2,\,-r{\bf b}_1-{\bf b}_2\}$, the Lie algebra $\EuScript{G}$ is generated by
		$\{\EuScript{G}_{(r+1){\bf b}_1+{\bf b}_2},\ \EuScript{G}_{-r{\bf b}_1-{\bf b}_2}\}$. Hence $\psi_r$ is injective by the irreducibility of $V$, which implies that $X$ is cuspidal over $\EuScript{G}_M$. 
		
		By an application of the PBW theorem, we see that $X$ is an irreducible $\EuScript{G}_M$-module. Moreover, by the construction of $\mathbb{M}_{\EuScript{G}}(X,{\bf b}_2,\Z{\bf b}_1)$ and an application of the PBW theorem, there exists a surjective homomorphism
		from $\mathbb{M}_{\EuScript{G}}(X,{\bf b}_2,\Z{\bf b}_1)$ to $V$. Since $V$ is irreducible, we conclude that
		$V \cong \mathbb{L}(X,{\bf b}_2,\Z{\bf b}_1)$.
	\end{proof}
\end{prop}

We now state some useful results in the following lemma. Their proofs follow by using Lemma \ref{a30}, Proposition \ref{a40}, and Proposition \ref{a80}, in a similar way as in \cite{lz,lt,ts}.
\begin{lemma}\label{a29}
	In each of the following cases, $V$ is isomorphic to a highest weight module for $\La$.
	\begin{enumerate}
		\item There exist $(s_1,s_2)\in\Z^2$, $(r_1,r_2)\in\Z^2\setminus\{(0,0)\}$, and $l,m\in\Z$ such that
		\[
		(-\infty,l)\cup(m,\infty)\subseteq \{p\in\Z:\Lambda+(s_1,s_2)+p\,(r_1,r_2)\in P(V)\}.
		\]
		
		\item There exist $(s_1,s_2)\in\Z^2$ and $(r_1,r_2)\in\Z^2\setminus\{(0,0)\}$ such that
		\[
		\{p\in\Z:\Lambda+(s_1,s_2)+p\,(r_1,r_2)\in P(V)\}=\emptyset.
		\]
		
		\item There exist $(s_1,s_2),(r_1,r_2)\in\Z^2$ and $p_1,p_2,p_3\in\Z$ with $p_1<p_2<p_3$ such that
		\begin{equation*}
			\begin{aligned}
				&\Lambda+(s_1,s_2)+p_1(r_1,r_2)\notin P(V),\\
				&\Lambda+(s_1,s_2)+p_2(r_1,r_2)\in P(V), \,\,\text{and}\\
				&\Lambda+(s_1,s_2)+p_3(r_1,r_2)\notin P(V).
			\end{aligned}
		\end{equation*}
	\end{enumerate}
\end{lemma}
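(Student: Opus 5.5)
The plan is to reduce each of the three cases to Proposition \ref{a80}, or to Proposition \ref{a40} with $n=2$. In other words, in each case we aim to produce a $\Z$-basis $\{{\bf b}_1,{\bf b}_2\}$ of $\Z^2$ and a weight $\mu\in P(V)$ with $(\mu+\Z{\bf b}_1+\N{\bf b}_2)\cap P(V)=\emptyset$. Throughout we work in the coordinates of Lemma \ref{a30}, so $\Lambda$ is a GHW with respect to the standard basis, and properties (2)--(6) of that lemma are available. Replacing $(r_1,r_2)$ by ${\bf r}/\gcd(r_1,r_2)$ and enlarging the corresponding index sets only costs a finite union of residue classes. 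We may therefore assume ${\bf r}=(r_1,r_2)$ is primitive, and after possibly replacing ${\bf r}$ by $-{\bf r}$, that it is not in $-\N^2$ unless it is in $\N^2$.

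\emph{Case (2).} Here $\Lambda+{\bf s}+\Z{\bf r}$ misses $P(V)$. If ${\bf r}$ is primitive, put ${\bf k}=\bar{\bf r}=(r_2,-r_1)$; then $\{{\bf l}:({\bf k}|{\bf l})=({\bf k}|{\bf s})\}={\bf s}+\Z{\bf r}$. This is exactly the hypothesis of Proposition \ref{a40} with ${\bf p}=(1,1)$, since the set $\{{\bf r}'\in\Z^2: k_1r'_1+k_2r'_2=0\}$ is $\Z{\bf r}$. The conclusion is then immediate. If ${\bf r}$ is not primitive, we apply Proposition \ref{a40} with ${\bf p}$ chosen so that $(p_1r'_1,p_2r'_2)$ runs over multiples of ${\bf r}$. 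For example, take ${\bf p}=(g,g)$ with $g=\gcd(r_1,r_2)$ and ${\bf k}={\bf r}/g$ rotated.

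\emph{Case (3).} Write ${\bf w}={\bf s}+p_2{\bf r}$, so that $\Lambda+{\bf w}\in P(V)$ while $\Lambda+{\bf w}+(p_1-p_2){\bf r}$ and $\Lambda+{\bf w}+(p_3-p_2){\bf r}$ are not weights. First suppose ${\bf r}\in\Z_+^2$ or ${\bf r}\in-\Z_+^2$. Then Lemma \ref{a30}(5) forces every smaller point to be a weight, which contradicts the non-weight on the negative side. So ${\bf r}$ has one positive and one negative coordinate, say $r_1>0>r_2$.

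Now use the monotonicity (4)--(5). The non-weight $\Lambda+{\bf w}+(p_3-p_2){\bf r}$ propagates to the quadrant above it, and the non-weight $\Lambda+{\bf w}+(p_1-p_2){\bf r}$ propagates to the quadrant above that. Together these two quadrants cover all lattice points ${\bf w}+q{\bf r}+{\bf x}$ with ${\bf x}\in\Z_+^2$ and $q\ge p_3-p_2$ or $q\le p_1-p_2$. Combining this with Lemma \ref{a30}(6) along the direction $(1,1)$, we then translate ${\bf w}$ upward along $(1,1)$ until we reach the last weight $\mu$ on that ray. This gives a weight $\mu$ such that $\mu+\Z{\bf r}$ contains only finitely many weights. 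A further translation by $(1,1)$ yields a full line $\mu+(1,1)+\Z{\bf r}$ containing no weights, which reduces Case (3) to Case (2).

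\emph{Case (1).} A line $\Lambda+{\bf s}+\Z{\bf r}$ contains weights at both ends. If ${\bf r}$ has coordinates of the same sign, this contradicts Lemma \ref{a30}(6) applied along $\pm{\bf r}$, because in that case the weight set along a positive direction is bounded above. So again $r_1>0>r_2$. Pick $p<l$ and $q>m$ such that both $\Lambda+{\bf s}+p{\bf r}$ and $\Lambda+{\bf s}+q{\bf r}$ are weights. By (5), all points below either of them are weights. Moving the line upward by multiples of $(1,1)$ and using (6), we eventually reach a translate on which some middle point is not a weight. At that stage, if there is a weight strictly between two non-weights on some parallel line, we are in Case (3). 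Otherwise each translated line has a weight set that is an interval or a union of two rays. Pushing further up via (6) gives a translate with no weights, which is Case (2).

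The main obstacle is this bookkeeping in Case (1): we must ensure that the translation produces either an isolated gap pattern (Case (3)) or an empty line (Case (2)), and that it never yields full lines indefinitely. That last possibility is excluded because, by Lemma \ref{a30}(2), $\Lambda+(k,k)$ is not a weight for $k>0$. Hence the lines through these points have non-weights, and the pattern of weights on them must eventually degenerate into one of the two reducible cases.
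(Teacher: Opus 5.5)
Your Case (2) is correct. With $g=\gcd(r_1,r_2)$, ${\bf k}=\bar{\bf r}/g$ and ${\bf p}=(g,g)$, the set in Proposition \ref{a40} is exactly $\Lambda+{\bf s}+\Z{\bf r}$. The gap is in Cases (3) and (1).

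In Case (3), Lemma \ref{a30}(4),(5) only propagate non-weights in $\Z_+^2$-directions. When $r_1>0>r_2$, the difference ${\bf w}+q{\bf r}-\bigl({\bf w}+(p_3-p_2){\bf r}\bigr)=(q-p_3+p_2){\bf r}$ has negative second coordinate for $q>p_3-p_2$. So your two quadrants do not cover the far parts of the line. Nothing you use forces $\mu+\Z{\bf r}$ to carry only finitely many weights, let alone forces a translate to be empty. In Case (1), Lemma \ref{a30}(6) bounds each ray ${\bf y}+\Z_+(1,1)$ separately, not uniformly along a line. Translates can therefore keep a single ray of weights forever, which is none of the three cases, and your fallback to Case (3) inherits the previous gap.

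This cannot be repaired by better bookkeeping: your data do not imply the lemma. You use only Lemma \ref{a30}(2)--(6) and the hypotheses of Propositions \ref{a40} and \ref{a80}, all of which see only $P(V)$. Identify $\Lambda+{\bf y}$ with ${\bf y}$ and take
$S=\{{\bf y}\in\Z^2: y_1+\sqrt2\,y_2\le 0\}\cup\{{\bf y}: {\bf y}\le(20,-10)\}$.
\begin{itemize}
\item $S$ is down-closed, contains $-\Z_+^2$ and misses $\Z_+^2\setminus\{{\bf 0}\}$. Every ray in a $\Z_+^2\setminus\{{\bf 0}\}$ direction meets it in a set of the form $(-\infty,q]$. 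So (2)--(6) of Lemma \ref{a30} hold.
\item On the line ${\bf w}+\Z(1,-1)$ with ${\bf w}=(20,-10)$, $S$ contains $q=0$ and all $q\ge 15$, but not $q=\pm1$. So Case (3) holds.
\item The last point of ${\bf w}+\Z_+(1,1)$ in $S$ is ${\bf w}$ itself, and ${\bf w}+\Z(1,-1)$ contains infinitely many points of $S$. This directly contradicts your finiteness claim.
\item Because the boundary is irrational, every rational line, every coset as in Proposition \ref{a40}, and every half-plane $\lambda+\Z{\bf b}_1+\N{\bf b}_2$ meets $S$. So neither proposition can ever be triggered.
\end{itemize}
Similarly, $\{y_1+y_2\le 0\}\cup\{y_1+\sqrt2\,y_2\le0\}$ realizes Case (1), and every translate $y_1+y_2=c>0$ meets it in a ray.

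What is missing is a module-theoretic argument using actual weight vectors. For instance, in Case (3) a vector $0\ne v\in V_{\Lambda+{\bf w}}$ is annihilated by $\La_{(p_1-p_2){\bf r}}$ and $\La_{(p_3-p_2){\bf r}}$. From there, bracket computations must produce either an infinite-dimensional weight space or a trivial module, in the style of Lemma \ref{a35} and Theorem \ref{a36} and the works the paper cites. You cannot shortcut this through Lemma \ref{a32}: it has exactly this hypothesis, but its proof invokes Lemma \ref{a29}.
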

\begin{lemma}\label{a32}
	Let $\mu\in P(V)$. Suppose there exist integers $i>0$, $j<0$, and ${\bf 0} \neq {\bf s}=(s_1,s_2)\in\Z^2 \setminus \{(0,0)\}$ such that
	\[
	d(i{\bf s})v=t^{i{\bf s}}v=0=d(j{\bf s})v=t^{j{\bf s}}v
	\qquad \text{for some } v\in V_\mu.
	\]
	Then $V$ is isomorphic to a highest weight module for $\La$.
	\begin{proof}
		Let $c=\text{gcd}\,(s_1,s_2)$, then we can write ${\bf{s}}=c\,(p_1, p_2)$, where $\text{gcd}(p_1, p_2)=1$. Hence there exist integers $q_1,q_2$ such that
		$p_1q_2-p_2q_1=1$. Set ${\bf p}=(p_1,p_2), \,\, {\bf q}=(q_1,q_2)$.
		Then $\{{\bf p},{\bf q}\}$ is a $\Z$-basis of $\Z^2$. Now assume for the contrary, $V$ is not a highest weight module. Then Lemma \ref{a29} implies that, for every $0\neq l\in\Z$, there exists $r_l\in\Z$ such that
		\begin{center}
			$B_l=\{k \in \Z: \mu +l{\bf{q}}+k{\bf{p}} \in P(V)\}=(-\infty, r_l]$ or $[r_l, \infty)$.
		\end{center} 
		First assume that $B_l=(-\infty,r_l]$. Then there exists $s_l\in\N$ such that
		$\EuScript{G}_{\,l{\bf q}-(jcs_l\pm1){\bf p}}\,v=0$.	Hence by using the relation $\EuScript{G}_{jc{\bf p}}v=0$ together with the commutator relations in $\EuScript{G}$, we deduce that $\EuScript{G}_{\,l{\bf q}\pm{\bf p}}v=0$. In particular,
		\begin{equation*}
			\EuScript{G}_{\pm({\bf q}+{\bf p})}v=0=\EuScript{G}_{\pm(2{\bf q}+{\bf p})}v.
		\end{equation*}
		Since $\{{\bf q}+{\bf p},\,2{\bf q}+{\bf p}\}$ is a $\Z$-basis of $\Z^2$, the irreducibility of $V$ implies that $V$ is trivial, which is a contradiction. Similarly, if $B_l=[r_l,\infty)$, then using $\EuScript{G}_{i{\bf s}}v=0$, we again obtain a contradiction. This completes the proof.
	\end{proof}
\end{lemma}
\vspace{0.75cm}
From now on, we assume that $V$ is not a highest weight module. Then by using Lemma \ref{a29}, it is easy to see that for any $(s_1,s_2) \in \Z^2$ and $(0,0) \neq (r_1,r_2) \in \Z^2$, there exists $l \in \Z$ such that
\begin{equation}\label{a31}
	\{p \in \Z: \Lambda+(s_1,s_2)+p(r_1,r_2) \in P(V)\}=(-\infty, l]\,\, \text{or}\,\, [l, \infty).
\end{equation}
Hence, Lemma \ref{a30} implies that, for every $i \in \N$, there exist $l_i, m_i \in \Z_+$ such that
\begin{equation*}
	\{p \in \Z : \Lambda + (-i,p) \in P(V)\} = (-\infty,l_i],
	\qquad
	\{p \in \Z : \Lambda + (p,-i) \in P(V)\} = (-\infty,m_i].
\end{equation*}
We can now derive the following statements by using arguments similar to those used in Theorem 3.7 of \cite{lz}.

\smallskip
\noindent
(S1) The following limits exist:
\begin{equation*}
	\lim_{i \to \infty} \frac{l_i}{i} = \alpha,
	\qquad
	\lim_{i \to \infty} \frac{m_i}{i} = \beta.
\end{equation*}

\noindent
(S2) $\alpha$ and $\beta$ are positive irrational numbers satisfying $\alpha = \beta^{-1}$.

\noindent
(S3) Moreover, the order $<_{\alpha}$ is dense on $\Z^2$, i.e., for every $(p,q) >_{\alpha} (0,0)$, there exist infinitely many $(r,s) \in \Z^2$ such that
\[
(0,0) <_{\alpha} (r,s) <_{\alpha} (p,q).
\]

\noindent
(S4) If $\Lambda + (r,s) \in P(V)$, then $\Lambda + (p,q) \in P(V)$ for all $(p,q) \in \Z^2$ satisfying
$(p,q) <_{\alpha} (r,s)$.

Set
\[
\Z^2(+) := \{(p,q) \in \Z^2 : (p,q) >_{\alpha} (0,0)\},
\qquad
\Z^2(-) := \{(p,q) \in \Z^2 : (p,q) <_{\alpha} (0,0)\}.
\]
This order naturally induces a triangular decomposition
\begin{equation*}
	\EuScript{G} = \EuScript{G}^{+}_{<_{\alpha}} \oplus \EuScript{G}^{0}_{<_{\alpha}} \oplus \EuScript{G}^{-}_{<_{\alpha}}, \qquad \text{where}\qquad \EuScript{G}^{0}_{<_{\alpha}} = \C d_1 \oplus \C d_2 \oplus \C t^{\bf 0}.
\end{equation*}

\begin{lemma}\label{a35}
	Assume that $V$ is not a highest weight module, and let $\mu \in P(V)$. Then the following statements hold:
	\begin{enumerate}
		\item For every ${\bf{m}}=(m_1,m_2) \in \Z^2(+) $ and every $v \in V_{\mu}$, we have $\EuScript{G}_{-{\bf{m}}}v \neq \{0\}$.
		\medskip
		\item $(\mu + \Z^2(+)) \cap P(V) \neq \emptyset$.
	\end{enumerate}
	\begin{proof}
		(1) Suppose that $\EuScript{G}_{-{\bf m}}v = 0$ for some ${\bf m}=(m_1,m_2) \in \Z^2(+)$ and some $v \in V_{\mu}$. By \eqref{a31}, together with (S3) and (S4), it follows that, for every $(s_1,s_2) \in \Z^2$, there exists $l \in \Z$ such that
		\begin{equation*}
			\{p \in \Z : \Lambda + (s_1,s_2) + p(r_1,r_2) \in P(V)\} = (-\infty,l], \qquad \forall\, (r_1,r_2) \in \Z^2(+).
		\end{equation*}
		In particular, for the fixed ${\bf m}$, we may choose $p \in \N$ sufficiently large so that $\EuScript{G}_{p{\bf m}}v = 0$. Therefore, Lemma \ref{a32} implies that $V$ is a highest weight module, a contradiction. This proves (1).
		
		\smallskip
		(2) Suppose, on the contrary, that $(\mu + \Z^2(+)) \cap P(V) = \emptyset$. By the density of the order $<_{\alpha}$ from (S3), for any $k \in \N$, we can choose ${\bf b}=(r,s) \in \Z^2$ with $r \neq 0$ such that $-\frac{1}{4k} < r\alpha + s < 0$. Therefore, $-{\bf e}_2 <_{\alpha} i{\bf b} <_{\alpha} (0,0)$, for all $1 \leq i \leq 4k$. Fix $v \in V_{\mu}$. Since $t^{-{\bf b}}v = 0 = d(-{\bf b})v$, Lemma \ref{a32} implies that $t^{\bf b}v \neq 0$ or $d({\bf b})v \neq 0$. Moreover, using
		\[
		t^{-2{\bf b}}t^{\bf b}v = d(-2{\bf b})t^{\bf b}v = t^{-2{\bf b}}d({\bf b})v = d(-2{\bf b})d({\bf b})v = 0,
		\]
		and applying Lemma \ref{a32} again, we deduce that at least one of the vectors
		$t^{\bf b}t^{\bf b}v, \,t^{\bf b}d({\bf b})v$, $d({\bf b})d({\bf b})v$
		is nonzero. In each case, we shall obtain a contradiction by showing that $V_{\mu-{\bf e}_2}$ is infinite-dimensional. For each $1 \leq i \leq k$, define
		\[
		{\bf p}_i := -(2i-1){\bf b} \in \Z^2(+), \qquad {\bf q}_i := {\bf e}_2 - {\bf p}_i \in \Z^2(+).
		\]
		Assume first that $t^{\bf b}t^{\bf b}v \neq 0$. We claim that
		$\{t^{-{\bf p}_i}t^{-{\bf q}_i}v : 1 \leq i \leq k\} \subseteq V_{\mu-{\bf e}_2}$
		is linearly independent. Set ${\bf{m}}={\bf{e}}_2+2{\bf{b}} \in \Z^2(+)$. Consider
		\begin{equation*}
			\begin{aligned}
				0&=\sum_{i=1}^{k} \lambda_i\, t^{-{\bf{p}}_i}\,t^{-{\bf{q}}_i}v=d( {\bf{m}})\Big(\sum_{i=1}^{k} \lambda_i\, t^{-{\bf{p}}_i}\,t^{-{\bf{q}}_i}\Big)v=\sum_{i=1}^{k} \lambda_i\,[d({\bf{m}}),\, t^{-{\bf{p}}_i}\,t^{-{\bf{q}}_i} ]\,v\\
				&=-\sum_{i=1}^{k} \lambda_i\,\Big( (\bar{\bf{m}}|{\bf{p}}_i)\,t^{{\bf{m}}-{\bf{p}}_i} t^{-{\bf{q}}_i} + (\bar{\bf{m}}|{\bf{q}}_i)\,t^{-{\bf{p}}_i}\,t^{{\bf{m}}-{\bf{q}}_i} \Big)v\\
				&=-\lambda_1 (\bar{\bf{m}}|{\bf{q}}_1)\, t^{-{\bf{p}}_1}\,t^{{\bf{m}}-{\bf{q}}_1}v=-r\lambda_1\,t^{\bf{b}}t^{\bf{b}}v
			\end{aligned}
		\end{equation*}
		Hence, $\lambda_1 = 0$. Here we use the facts
		\begin{equation*}
			{\bf p}_i <_{\alpha} {\bf m}, \qquad {\bf q}_1 >_{\alpha} {\bf m}, \qquad {\bf q}_j <_{\alpha} {\bf m}, \quad \forall\,\, 1 \leq i \leq k,\; 2 \leq j \leq k.
		\end{equation*}
		Similarly, we can show that $\lambda_i=0$ for all $2 \leq i \leq k$. Since $k$ is arbitrary, this forces $\dim V_{\mu-{\bf e}_2} = \infty$, a contradiction.
		
		Again, assume that $d({\bf b})d({\bf b})v \neq 0$. Consider
		\begin{equation*}
			\begin{aligned}
				0&=\sum_{i=1}^{k} \lambda_i\, d({-{\bf{q}}_i})\,d({-{\bf{p}}_i})v=d( {\bf{m}}) \Big(\sum_{i=1}^{k} \lambda_i\, d({-{\bf{q}}_i})\,d({-{\bf{p}}_i})\Big)v\\
				&=\sum_{i=1}^{k} \lambda_i\,[d({\bf{m}}), d({-{\bf{q}}_i})\,d({-{\bf{p}}_i}) ]v=r\sum_{i=1}^{k} \lambda_i\,(3-2i)\, d \big((3-2i){\bf{b}}\big)\,d({-{\bf{p}}_i})v\\
				&=r\lambda_1 d({\bf{b}})\,d({\bf{b}})v +r \sum_{i=2}^{k} \lambda_i\,(3-2i)\, d \big((3-2i)\,{\bf{b}}\big)\,d \big((2i-1){\bf{b}}\big)v\\
				&=r\lambda_1 d({\bf{b}})\,d({\bf{b}})v.
			\end{aligned}
		\end{equation*}
		Hence $\lambda_1 = 0$, and similarly we can prove that $\lambda_2 = \cdots = \lambda_k = 0$. Finally, if $t^{\bf b}d({\bf b})v \neq 0$, then one can similarly show that $\{t^{-{\bf q}_i}d(-{\bf p}_i)v : 1 \leq i \leq k\}$ is linearly independent, and this completes the proof.
	\end{proof}
\end{lemma}	
	
\begin{theorem}\label{a36}
	Let $V$ be an irreducible GHW module over $\La$. Then $V$ is isomorphic to a highest weight module for $\La$.
	\begin{proof}
		Assume, for contradiction, that $V$ is not a highest weight module. Let $\mu \in P(V)$. By Lemma \ref{a35}, we can choose ${\bf a}=(a_1,a_2) \in \Z^2(+)$ such that $\mu+{\bf a} \in P(V)$. Since the order $<_{\alpha}$ is dense and ${\bf a} \in \Z^2(+)$, for any $k \in \N$ we may choose ${\bf b}=(r,s) \in \Z^2$ such that $(\bar{{\bf a}}|{\bf b}) \neq 0$ and $0 < r\alpha+s < \frac{a_1\alpha+a_2}{4k}$.
		It follows that $(0,0) <_{\alpha} i{\bf b} <_{\alpha} {\bf a}$, for all $1 \leq i \leq 4k$. Set
		\begin{equation}\label{a33}
			m_1= \text{max}\,\{j \in \Z: \mu+j{\bf{a}} \in P(V)\},\quad m_2= \text{max}\,\{j \in \Z: \mu+m_1{\bf{a}}+j {\bf{b}} \in P(V)\}
		\end{equation}
		Clearly, $m_1 \geq 1$ and $m_2 \geq 0$. Let $\gamma = \mu+m_1{\bf a}+m_2{\bf b}$,
		and choose $0 \neq v \in V_{\gamma}$. By \eqref{a33}, Lemma \ref{a32}, and the relations
		\[
		t^{\bf b}v = d({\bf b})v = t^{2{\bf b}}t^{-{\bf b}}v = d(2{\bf b})t^{-{\bf b}}v = t^{2{\bf b}}d(-{\bf b})v = d(2{\bf b})d(-{\bf b})v = 0,
		\]
		it follows that at least one of the vectors
		$t^{-{\bf b}}t^{-{\bf b}}v, \, t^{-{\bf b}}d(-{\bf b})v, \, d(-{\bf b})d(-{\bf b})v$
		is nonzero. In each case, we will show that $V_{\gamma-{\bf a}}$ is infinite-dimensional. For $1 \leq i \leq k$, set
		\[
		{\bf p}_i = (2i-1){\bf b} >_{\alpha} (0,0), \qquad
		{\bf q}_i = {\bf a}-{\bf p}_i >_{\alpha} (0,0), \qquad
		{\bf m} = {\bf a}-2{\bf b} >_{\alpha} (0,0).
		\]
		Assume first that $t^{-{\bf b}}t^{-{\bf b}}v \neq 0$.
		We now consider the set $\{t^{-{\bf{q}}_i}\,t^{-{\bf{p}}_i}v: 1 \leq i \leq k\} \subseteq V_{\gamma-{\bf{a}}}$ and claim that it is linearly independent. Indeed, if
		$\sum_{i=1}^{k} \lambda_i\, t^{-{\bf q}_i}t^{-{\bf p}_i}v = 0$, then applying $d({\bf m})$ yields
		\begin{equation}\label{a34}
			\begin{aligned}
				0&=d( {\bf{m}}) \Big(\sum_{i=1}^{k} \lambda_i\, t^{-{\bf{q}}_i}\,t^{-{\bf{p}}_i}\Big)v=\sum_{i=1}^{k} \lambda_i\,[\,d({\bf{m}}), \,t^{-{\bf{q}}_i}\,t^{-{\bf{p}}_i} ]\,v\\
				&=-\sum_{i=1}^{k} \lambda_i\,\Big( (\bar{\bf{m}}|{\bf{q}}_i)\,t^{{\bf{m}}-{\bf{q}}_i} t^{-{\bf{p}}_i} + (\bar{\bf{m}}|{\bf{p}}_i)\,t^{-{\bf{q}}_i}\,t^{{\bf{m}}-{\bf{p}}_i} \Big)\,v\\
			\end{aligned}
		\end{equation}
		Now if $t^{{\bf m}-{\bf p}_i}v \neq 0$ for some $i$, then $\gamma+{\bf m}-{\bf p}_i \in P(V)$ for that $i$. Since
		\[
		(\gamma+{\bf m}-{\bf p}_i)-(\gamma+{\bf b})={\bf a}-(2i+2){\bf b} \in \Z^2(+), \qquad \forall\, 1 \leq i \leq k,
		\]
		(S4) implies that $\gamma+{\bf b} \in P(V)$, contradicting the choice of $m_1$ and $m_2$. Hence $t^{{\bf m}-{\bf p}_i}v = 0$ for all $1 \leq i \leq k$. On the other hand,
		${\bf m}-{\bf q}_i = (2i-3){\bf b}$,
		so $t^{{\bf m}-{\bf q}_i}v = 0$ for all $2 \leq i \leq k$. Therefore \eqref{a34} reduces to
		\begin{equation*}
			0=-\lambda_1 (\bar{\bf{m}}|{\bf{q}}_1) t^{-{\bf{p}}_1}\,t^{{\bf{m}}-{\bf{q}}_1}v=-\lambda_1 (\bar{{\bf{a}}}|{\bf{b}})t^{-\bf{b}}t^{-\bf{b}}v.
		\end{equation*}
		Therefore, $\lambda_1 = 0$. Similarly, we can show that $\lambda_2=\cdots=\lambda_k=0$. Thus the above set is linearly independent. The other two cases can be handled similarly. In each case, for arbitrary $k \in \N$, we obtain a linearly independent subset of $V_{\gamma-{\bf a}}$ with $k$ elements. Consequently,
		$\dim V_{\gamma-{\bf a}} \geq k$ for all  $k \in \N$,
		which contradicts the fact that $V$ is a Harish-Chandra module. Therefore $V$ must be a highest weight module. 
	\end{proof}
\end{theorem}
\medskip
\medskip
Thus, Theorem \ref{a28} has been proved for the case $n=2$. Now we are in a position to prove Theorem \ref{a28} for arbitrary $n$. The proof is exactly analogous to that of \cite[Lemma 3.9]{lz}. For completeness and clarity, we summarize the key steps here. One can look at \cite[Lemma 3.9]{lz} for the detailed justifications of the steps given below.

\medskip
\medskip
\medskip
\noindent\textbf{Proof of Theorem \ref{a28}:}\label{a99} We prove the theorem by induction on $n$. The case $n=2$ is precisely Theorem \ref{a36}. Assume that the theorem holds for all $N \leq n-1$. We shall prove it for $N=n$.

Proceeding similarly as in \cite[Lemma 3.8]{lz}, we obtain
\begin{equation}\label{a41}
	P\big(\mathbb{L}(X,\boldsymbol{\beta}, M)\big)= P(X) \cup (\mu - \N \boldsymbol{\beta} + M).
\end{equation}
Here we use the corresponding rank $2$ divergence-zero algebra in place of the rank $2$ Virasoro algebra. Now, if there exist ${\bf m} \in \Z^n$ and a corank $1$ subgroup $M_0$ of $\Z^n$ such that $(\Lambda + {\bf m} + M_0) \cap P(V) \subseteq \{0\}$,
then the theorem follows from Proposition \ref{a40}. Hence we may assume that for every ${\bf m} \in \Z^n$ and every corank $1$ subgroup $M_0$ of $\Z^n$,
\[
(\Lambda + {\bf m} + M_0) \cap P(V) \nsubseteq \{0\}.
\]
Consider
$V_{\Lambda + {\bf m} + M_0} := \bigoplus_{{\bf m}_0 \in M_0} V_{\Lambda + {\bf m} + {\bf m}_0}$,
which is a Harish-Chandra module over a divergence-zero algebra of rank $(n-1)$ (instead of $\text{Vir}[G_0]$ in \cite[Lemma 3.8]{lz}). Applying Lemma 3.2, \eqref{a41}, and the inductive hypothesis, we deduce that for every ${\bf m} \in \Z^n$ and every corank $1$ subgroup $M_0$ of $\Z^n$, there exist a corank $1$ subgroup $M_{0,1}$ of $M_0$, ${\bf m}_{0,1} \in M_0 \setminus \{{\bf 0}\}$ such that
$M_0 = \Z {\bf m}_{0,1} \oplus M_{0,1}$,
and $\lambda_0' \in \Lambda + {\bf m} + M_0$ satisfying
\begin{equation}\label{a43}
	\lambda_0' + M_{0,1} - \N {\bf m}_{0,1} \subseteq P(V).
\end{equation}	
Next, we are going to prove that such a module does not exist under the assumption \eqref{a43}. Proceeding as in Claim 1 of \cite[Lemma 3.8]{lz}, we deduce that there do not exist $\lambda_0 \in P(V)$, $t_0 \in \Z$, ${\bf m}_0, {\bf m}_1 \in \Z^n \setminus \{{\bf 0}\}$, or subgroups $M_1' \subseteq M_0' \subseteq \Z^n$ with
$\Z^n = \Z {\bf m}_0 \oplus M_0'$, and $M_0' = \Z {\bf m}_1 \oplus M_1'$, such that
\begin{equation}\label{a42}
	\lambda_0 - \Z_+ {\bf m}_1 + M_1', \,\,
	\lambda_0 + t_0 {\bf m}_1 + \Z_+ {\bf m}_1 + M_1' \subseteq P(V),
\end{equation}
and, if $t_0 \leq 0$, then $\lambda_0 + M_0' \subseteq P(V)$.

For each $t \in \Z$, set $\overline{M}_t=t{\bf{e}}_1 \oplus \Z {\bf{e}}_2 \oplus \cdots \oplus \Z {\bf{e}}_n$. Again as in Claim 2 of \cite[Lemma 3.8]{lz}, we can prove that if $\lambda_0 \in \Lambda+\Z^n$, ${\bf m}_1, {\bf m}_1' \in \overline{M}_0$, and $M_1, M_1'$ are subgroups of $\overline{M}_0$ satisfying
$\overline{M}_0 = \Z {\bf m}_1 \oplus M_1 = \Z {\bf m}_1' \oplus M_1'$, and
\[
\lambda_0 - \N {\bf m}_1 + M_1, \,\,
\lambda_0 - \N {\bf m}_1' + M_1' \subseteq P(V),
\]
then $M_1 = M_1'$.

Finally, set
$V_{\lambda+\overline{M}_t}:=\bigoplus_{{\bf m} \in \overline{M}_t} V_{\lambda+{\bf m}}$,
which is again a Harish-Chandra module over a divergence-zero algebra of rank $(n-1)$.
Then, exactly as in (3.42) of \cite[Lemma 3.8]{lz}, we can deduce that there exist a corank $1$ subgroup $M_0$ of $\overline{M}_0$, $\alpha_t \in \Lambda+\overline{M}_t$, and ${\bf m}_0 \in \overline{M}_0$ with
$\overline{M}_0 = \Z {\bf m}_0 \oplus M_0$ such that either
\begin{equation*}
	P(V_{\Lambda+\overline{M}_t}) \setminus \{{\bf{0}}\}= (\al_t + \Z_+ {\bf{m}}_0 + M_0) \setminus \{{\bf{0}}\} \,\,\,\,\text{or}\,\,\,\, (\al_t + \Z_- {\bf{m}}_0 + M_0) \setminus \{{\bf{0}}\}.
\end{equation*}

Proceeding exactly as in \cite[Lemma 3.8]{lz}, we finally obtain a contradiction to \eqref{a42}. This completes the proof.

\section{Main Theorem}\label{a97}
	We now summarize the results obtained so far. By combining Theorem \ref{a28}, Theorem \ref{a75}, and Theorem \ref{a90}, we obtain a classification of irreducible Harish-Chandra modules over $\D \ltimes \mathcal{A}_n$ with nontrivial $\mathcal{A}_n'$-action.
	\begin{theorem}\label{a100}
		Let $V$ be an irreducible Harish-Chandra module over $\D \ltimes \mathcal{A}_n$ with nontrivial $\mathcal{A}_n'$-action. Then:
		\begin{enumerate}
			\item $V$ is either cuspidal or a generalized highest weight module.
			\vspace{0.1cm}
			\item If $V$ is cuspidal, then $V$ is isomorphic to $V(\lambda, \boldsymbol{\alpha}, c)$ for some dominant integral weight $\lambda$ of $\mathfrak{sl}_n$, $\boldsymbol{\alpha} \in \C^n$, and $c \in \C^{*}$.
			\vspace{0.1cm}
			\item If $V$ is a generalized highest weight module, then $V$ is isomorphic to $\mathbb{L}(X,\boldsymbol{\beta}, M)$ for some subgroup $M$ of $\Z^n$, $\boldsymbol{\beta} \in \Z^n$ such that $\Z^n= M \oplus \Z \boldsymbol{\beta}$, and some irreducible $\EuScript{G}_M$-module $X$.
		\end{enumerate}
	\end{theorem}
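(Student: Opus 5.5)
The main theorem is an assembly of results already proved, so I will treat each of its three parts in turn. Throughout, $V$ is an irreducible Harish-Chandra $\La$-module on which $\mathcal{A}_n'$ acts nontrivially. In particular $V$ is nontrivial as a $\La$-module, since some $t^{\bf m}$ with ${\bf m}\neq{\bf 0}$ acts by a nonzero operator.

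For part (1), nontriviality puts $V$ within the scope of Theorem \ref{a75}, which directly gives that $V$ is either cuspidal or a GHW module. Remark \ref{a65} shows that the value of $t^{\bf 0}$ is irrelevant here: it acts as a scalar and the decomposition $\La=\D\ltimes\mathcal{A}_n'\oplus\C t^{\bf 0}$ is a direct sum of ideals. No further work is needed for this part.

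For part (2), suppose $V$ is cuspidal. Then $V$ satisfies the standing hypotheses of Section \ref{a94}, namely irreducible, cuspidal, with nontrivial $\mathcal{A}_n'$-action. Theorem \ref{a90} applies verbatim and yields $V\cong V(\lambda,\boldsymbol{\alpha},c)$ with $\lambda$ dominant integral for $\mathfrak{sl}_n$, $\boldsymbol{\alpha}\in\C^n$ and $c\in\C^*$. It is worth recording how that theorem is reached:
\begin{itemize}
\item Propositions \ref{a6} and \ref{a7} together rule out the locally nilpotent alternative, so every $t^{\bf r}$ with ${\bf r}\neq{\bf 0}$ acts injectively.
\item Theorem \ref{a19} then makes the $\mathcal{A}_n$-action associative up to the scalar $c\neq 0$.
\item This turns $V$ into a jet module, and the Billig--Talboom classification finishes the argument.
\end{itemize}
I would only double-check one point. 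The normalization of $t^{\bf 0}$ by $c$ in Theorem \ref{a19} must match the scalar $e=c$ in $V(\lambda,\boldsymbol{\alpha},e)$. I expect this to be consistent by Remark \ref{a65}.

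For part (3), suppose $V$ is a GHW module. I would first apply Lemma \ref{a30}, after a change of coordinates if needed, so that the GHW vector is taken with respect to the standard basis as assumed at the start of Section \ref{a96}. Theorem \ref{a28} then gives $V\cong\mathbb{L}(X,\boldsymbol{\beta},M)$ with $\Z^n=M\oplus\Z\boldsymbol{\beta}$ and $X$ an irreducible $\La_M$-module. A change of coordinates is implemented by the automorphisms $T_A$ or $\Phi$. These carry generalized Verma modules and their irreducible quotients to modules of the same form, with $M$, $\boldsymbol{\beta}$ and $X$ transformed accordingly. So the conclusion holds in the original coordinates as well.

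The only real obstacle is that the substantive input for part (3) is the inductive proof of Theorem \ref{a28}. That proof relies on adapting \cite[Lemma 3.8, 3.9]{lz} with the rank-$2$ divergence-zero algebra in place of the Virasoro-type algebra. Since the statement allows results stated earlier to be assumed, I would simply invoke Theorem \ref{a28}, and the theorem follows by combining these three pieces.
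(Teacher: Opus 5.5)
Your proposal is correct and follows the paper, whose proof of this theorem is exactly the combination of Theorem~\ref{a75}, Theorem~\ref{a90} and Theorem~\ref{a28}. Your coordinate-transport remark in part (3) only makes explicit a step the paper leaves implicit; Lemma~\ref{a30} does apply there, because by Propositions~\ref{a6} and~\ref{a7} an irreducible cuspidal module with nontrivial $\mathcal{A}_n'$-action has every $t^{\bf r}$, ${\bf r}\neq{\bf 0}$, injective, and so cannot be GHW.

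On the $t^{\bf 0}$ point you flag, Remark~\ref{a65} does not make the scalars match by itself. It is what the proof of Theorem~\ref{a19} uses to re-normalize the central element $t^{\bf 0}$, so part (2), in your version as in the paper's, identifies $V$ with $V(\lambda,\boldsymbol{\alpha},c)$ only up to this rescaling of $t^{\bf 0}$.
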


	\subsection*{Funding:} The author gratefully acknowledges support from the National Board for Higher Mathematics through a post-doctoral fellowship (Ref. No. 0204/27/(34)/2023/R\&D-II/11935.
	\subsection*{Acknowledgments.} The author would like to thank Prof. Sachin Sharma for suggesting the problem and for several helpful discussions. The author also thanks Ms. Ananya Gaur for many helpful discussions.
	
	%\subsection*{Data Availability:} Data sharing is not applicable to this article as no datasets were generated or analyzed during the current study.

\end{document}